\newcommand\redsout{\bgroup\markoverwith{\textcolor{red}{\rule[0.55ex]{2pt}{0.4pt}}}\ULon} % ²Êɫɾ³ýÏß
\newtheorem{theorem}{Theorem}
\newtheorem{lemma}{Lemma}
\newtheorem{definition}{Definition}
\newcommand{\rank}{{\rm rank\;}}
\newcommand{\revision}[1]{\textcolor{black}{#1}}
 \newcommand{\todo}[1]{\textcolor{magenta}{#1}} % yellow!60!violet, magenta
 \newcommand{\keypoint}[1]{\textcolor{black}{#1}} % blue
\title{\LARGE \bf
Formation control for multiple agents with local measurements: continuous-time and sampled-data-based cases
% TAC 2017 + general topology + Frenet-Serret frame, Á¬Ðø£¬²ÉÑù
}%»ùÓÚ¼«ÏÞ»··¨µÄ£¬¶þάƽÃæÉÏ£¬Ò»°ã±à¶ÓÎÊÌâ
\author{Chen~Wang, Shuai~Li, Weiguo~Xia, Jinan~Sun and~Guangming~Xie % <-this % stops a space
%---------------------------------------
\thanks{This work was supported in part by grants from the National Natural Science Foundation of China (NSFC, \revision{No. 61973007, 61973051, 91648120, 61603071, 61633002}), the Fundamental Research Funds for the Central Universities under Grant DUT19ZD103, and the Youth Star of Dalian Science and Technology (2018RQ51).}% <-this % stops a space
\thanks{C. Wang and J. Sun are with the National Engineering Research Center for Software Engineering, Peking University, Beijing 100871, China.	
       {\tt\small \{wangchen, sjn\}@pku.edu.cn}}% <-this % stops a space
\thanks{W. Xia is with the Key Laboratory of Intelligent Control and Optimization for Industrial Equipment
	of Ministry of Education, and is also with the School of Control Science and Engineering,
	Dalian University of Technology, Dalian 116024, China.
       {\tt\small wgxiaseu@dlut.edu.cn}}% <-this % stops a space
\thanks{C. Wang, S. Li and G. Xie are with the State Key Laboratory of Turbulence and Complex Systems,
	Intelligent Biomimetic Design Lab, College of Engineering, Peking University, Beijing 100871, China.
       {\tt\small \{wangchen, shuaier, xiegming\}@pku.edu.cn}}% <-this % stops a space
       }
\begin{document}
%\begin{CJK}{GBK}{}
%
%

%---------------------------------------
\maketitle
\thispagestyle{empty}
\pagestyle{empty}
%---------------------------------------
%

%
%%%%%%%%%%%%====================================%%%%%%%%%%%%
%%%%%%%%%%%%====================================%%%%%%%%%%%%
%%%%%%%%%%%%====================================%%%%%%%%%%%%
%

\begin{abstract}
    We study the formation control problem for a group of mobile agents in a plane, in which each agent is modeled as a kinematic point and can only use the local measurements in its \revision{local} frame. The agents are required to maintain a geometric pattern while keeping a desired distance to a static/moving target. The prescribed formation is a general one which can be any geometric pattern, and the neighboring relationship of the $N$-agent system only has the requirement of containing a directed spanning tree. To solve the formation control problem, a distributed controller is proposed based on the idea of decoupled design. One merit of the controller is that it only uses each agent's local measurements in its \revision{local} frame, so that a practical issue that the lack of a global coordinate frame or a common reference direction for real multi-robot systems is successfully solved. Considering another practical issue of real robotic applications that sampled data is desirable instead of continuous-time signals, the sampled-data based controller is developed. Theoretical analysis of the convergence to the desired formation is provided for the multi-agent system under both the continuous-time controller \revision{with a static/moving target} and the sampled-data based one \revision{with a static target}. Numerical simulations are given to show the effectiveness and performance of the controllers.
\end{abstract}
%
%\begin{IEEEkeywords}
%...
%\end{IEEEkeywords}

\par~

%
%%%%%%%%%%%%====================================%%%%%%%%%%%%
\section{Introduction}\label{se:introduction}
%%%%%%%%%%%%====================================%%%%%%%%%%%%
%

% ¼ÓһЩ²ÉÑùÏà¹ØµÄÎÄÏ××ÛÊö¡£Ç¿µ÷Frenet-Serret frame£¬Èõ»¯limit-cycle-based decoupled design

% ¹±Ï×£º
% 1. ¶Ô±È±ðÈËÎÄÕ£º»ùÓÚ¼«ÏÞ»·£¬×ÔÈ»±ÜÅö£¨µ«ÊÇ»¹Ã»ÄÜÖ¤Ã÷£©
% 2. ¶Ô±ÈTAC2017£¨»·Ðαà¶Ó¡¢Ò»½×¡¢Á¬Ðø£©: ¸ügeneralµÄÇé¿ö£¬Ò»°ãÍØÆË£¬ÇÒÒ»°ã±à¶Ó£»²ÉÑùϵͳ
% 3. ¶Ô±ÈCDC2018£¨Ò»°ã±à¶Ó¡¢¶þ½×¡¢Á¬Ðø£©: ²ÉÑùϵͳ£¬¸ü½Ó½üʵ¼Ê»úÆ÷ÈËϵͳ

Cooperation of a group of robots has wide practical potential
in various applications \cite{DuMa12,ChHoIs11,DiYaLi12}. % \cite{PeMuXi10,,Re09,BuDoHo10}.
In such cooperative tasks,
the robots can benefit from moving in formation with certain desired geometric shapes \cite{OhPaAh15,BuCoMa09}.
%For examples, by working in desired formations,
%the robots successfully complete the group tasks and even improve their performance,
%such as the quality of the collected data, and the robustness of group motion against random environmental disturbances \cite{OhPaAh15,BuCoMa09}.
Thus formation control of multi-robot systems has captured increasing attention \cite{OhPaAh15}.
However, the restrictions in application implementations cause the theoretical challenges of
controlling multiple robots.

One key theoretical challenge of such formation control problems for multi-agent systems arises from the fact that
the centralized coordination may not be allowed,
so that the robots can use only local information to implement their distributed control strategies.
To overcome this challenge, a considerable number of studies have focused on consensus based formation control
where the formation control problem is converted
%by a proper transformation
 to a state consensus problem.
Specifically, the dynamics of the agents are modeled as single-integrators \cite{SoLiFe16,WaXi17a}, double-integrators \cite{ShLiTe15}, and unicycles \cite{MaBrFr04, BrSeCa14, YuLi16a};
some constrained conditions are considered including input saturation \cite{SoLiFe16}, agents' locomotion constraints \cite{WaXiCa12b}, finite-time control \cite{WaXiCa12a}, and limited communication \cite{GaCaYuAnCa13}.
Among these, lots of research efforts have been devoted to the target circular formation problem.
\cite{ChSuLiWa17} has proposed swarm control laws to realize some formation configurations of large-scale swarms using the nonlinear bifurcation dynamics.
However, %their controller can work only for static formations for the double-integrator case, and
no theoretical analysis was provided.
In \cite{WaXi17}, controlling a group of agents to form a circle around a prescribed target was studied,
where collision avoidance among agents has been guaranteed. % with the aid of limit cycle oscillators.
However, the desired formation is limited to circles and only \revision{the} \revision{continuous}-time case is considered.

Another key theoretical challenge is % of controlling multiple agents to converge to and maintain a formation arises from the situation
that in lots of situations the robots can only use their local measurements without knowing the global coordinate frame or the common reference direction.
\cite{WaXiSuFaXi18} considered the formation problem for a group of mobile agents to maintain a prescribed distribution pattern.
% , as well as to rotate around or remain static relative to a static or moving target.
% The desired pattern is a more general one which can be almost all geometric patterns.
The proposed controller can be implemented in each agent's \revision{local} frame so that the challenge of lack of a global coordinate frame or a common reference direction has been overcome.
However, the neighboring relationship among agents is restricted to a special one described by a ring topology;
and they haven't considered the case that sampled-data is desirable instead of continuous-time signals.

% ±¾ÎŤ×÷
The goal of this paper is to design a distributed controller that can guide a group of mobile agents in a plane to form any given formation. % by using {only the local measurements} in each agent's \revision{local} frame.
The general control objective of the problem comprises two specific sub-objectives.
One is to form a desired geometric pattern where each pair of agents converges to a desired distance.
The other is to keep the formed geometric pattern rotating around a static/moving target when keeping a desired distance to the target.
It is worth to emphasize that the geometric patterns here allow that the distances between neighbors are distinguished and the distances from the agents to the target are different.
Thus the desired formation can be any geometric pattern.
We consider a system consisting of multiple agents modeled as single integrators.
The agents can only have local measurements in their own \revision{local} frame without knowing the information or a global coordinate frame of a common reference direction.

To realize the formation, a decoupled design is delivered in this paper.
We propose to use a controller for each agent comprised of two parts to deal with the two sub-objectives of the formation control problem, respectively.
Specifically, the designed controller is presented in each agent's \revision{local} frame,
since only the local measurements are accessible.
%
%To construct such a controller, the key idea is to first design its two parts separately,
%where one part is to address the sub-objective of keeping a desired distance to the target,
%while the other part is to deal with the sub-objective of forming a desired geometric pattern.
%%
%Then we combine the two parts into a complete controller for the group of mobile agents to solve the formation control problem.

The main contributions of this paper is threefold.
% ËæÌå
First, we investigate the formation control problem only using each agent's local measurements in its \revision{local} frame, so that a practical issue that the lack of a global coordinate frame or a common reference direction for real multi-robot systems is successfully solved.
% ²ÉÑù
Second, we take into account another practical issue that real robotic applications requires sampled data instead of continuous-time signals, so that the sampled-data based controller is developed and analyzed.
% ±ÜÅö¡¢²ÎÊýÎïÀíº¬Òå
Third, both the continuous-time controller and the sampled-data one have a nice property that
some parameters in the designed controller have explicit physical meanings,
so that these parameters can be selected more reasonable and easily according to the request of the robots' motion characteristics when applied to real robot systems in the future.

The rest of the paper is organized as follows.
In Section \ref{se:problem}, we formulate the formation control problem and give some useful preliminary results.
Then we design a distributed controller  using only the local measurements of the agents and provide rigorous analysis on its performances in Section \ref{se:continuous}.
In Section \ref{se:sampled}, a sampled-data based control law is investigated.
Simulation results are given in Section \ref{se:simulations}.
Finally, Section \ref{se:conclusions} concludes this paper.
%
% \revision{Proofs of all assertions in this paper are omitted due to length limit and can be found in an expanded version of the paper \cite{WaLiXiSuXi19a}.}

%\todo{------Notation---¸ù¾ÝºóÃæµÄÖ¤Ã÷Ìí¼Ó¡¢ÐÞ¸Ä------}

% \par~

\emph{\keypoint{Notation}}:
$\mathbb{R}$ denotes the set of real numbers.
$|\cdot|$ describes the absolute value of a scalar or the number of elements in a set.
For a matrix $A$,
 $A^T$, $\|A\|$ and $\rank(A)$ denote its transpose, Euclidean norm, and rank, respectively.
% For a matrix $A \in \mathbb{R}^{n \times n}$, $\mathbb{G}(A)$ represents the associated graph of $A$,
% which has $n$ nodes labeled by $1,2,\ldots,n$, and an edge $(j,i)$ exists if and only if the $(i-j)$-th entry of $A$ is nonzero.
%
% $\mathbf{1}$ represents the all-one vector and  $\mathbf{0}$ represents the all-zero {matrix}.

\par~

%
%%%%%%%%%%%%====================================%%%%%%%%%%%%
\section{Problem formulation and preliminaries}\label{se:problem}
%%%%%%%%%%%%====================================%%%%%%%%%%%%
%

In this section, we first formulate the problem of formation control for a group of mobile agents  using only their local measurements, and then give some useful preliminary results.

\begin{figure}[thpb]
\begin{center}
           \subfigure[\revision{Agent $i$'s local} frame]{\label{fig:Polar_relative}
           	\includegraphics[width=0.5\linewidth]{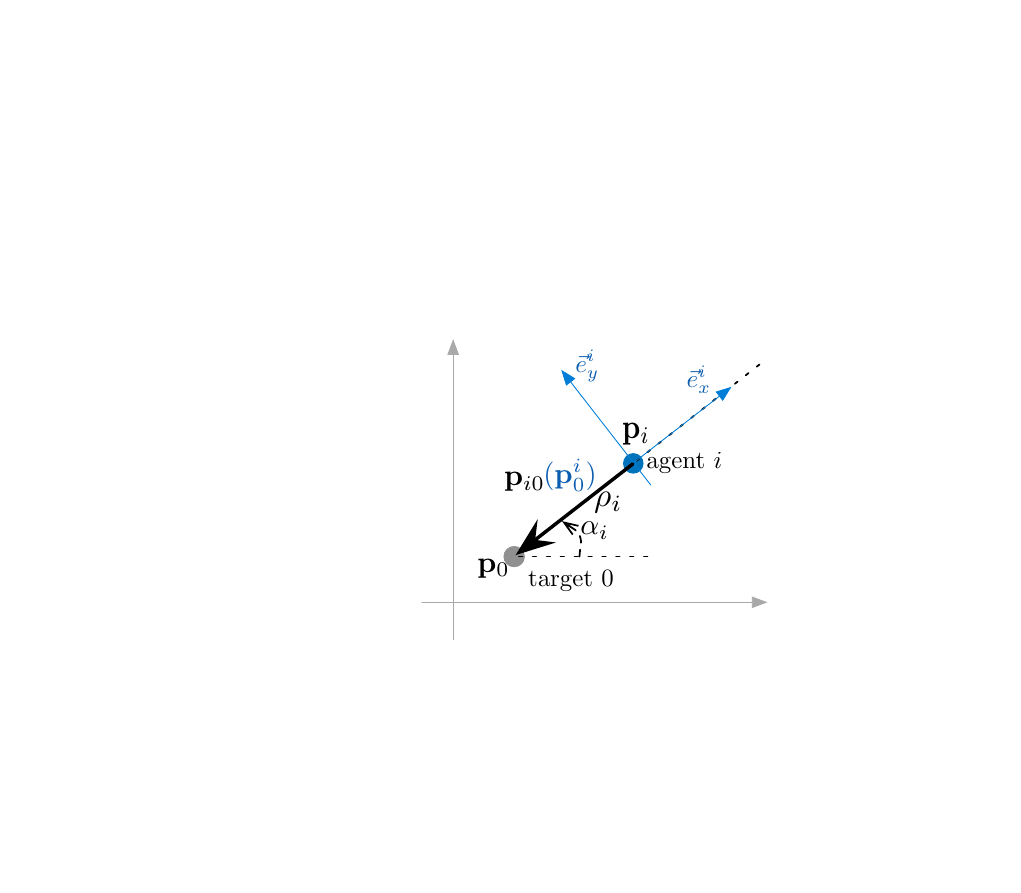}} \\
           \subfigure[Locally implementable control]{\label{fig:StatesRelationship}
           	\includegraphics[width=0.5\linewidth]{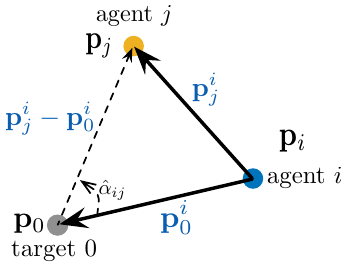}}
   \caption{Formation control in a plane.
        (a) Each agent $i$ can use only the local measurements in its \revision{local} frame.
        (b) The angular distance $\hat{\alpha}_{ij}$ between agent $i$ to its neighbor can be easily calculated by its local measurements.
   }\label{fig:Formation}
\end{center}   % \vspace{-0.8cm}
\end{figure}
%

%%% =========== %%%
\subsection{Problem formulation}
%%% =========== %%%

% ³õ'·Ö²¼
Consider a group of $N$, $N \geq 2$, agents labeled $1$ to $N$ and a {static/}moving target labeled $0$ in a plane.
The $N$ agents and the target can move freely in the plane.
The $N$ agents' initial positions are NOT required to be distinguished from each other, whereas no agent occupies the same position as the target.
%
% ÍØÆËͼ¡¢ÁÚ¾Ó¹Øϵ
We associate the $N$-agent system with a directed graph $\mathbb{G}=(\mathcal{V},\mathcal{E})$,
where the node set $\mathcal{V}=\{1,2,\ldots,N\}$ represents $N$ agents,
and the edge set $\mathcal{E}=\{(j,i) : i,j \in \mathcal{V} \}$ characterizes local interactions between neighboring agents.
Thus a directed edge $(j,i) \in \mathcal{E}$ implies that agent $i$ can measure the relative information of agent $j$.
Then the neighbor set of agent $i$ is denoted as $\mathcal{N}_i = \{ j : (j,i) \in  \mathcal{E}  \}$.
Let matrix $A=(a_{ij})_{N\times N}$ be the adjacency matrix for $\mathbb{G}$,
% represented by $\mathbb{G}(A)$,
and then $a_{ij}>0$ if and only if $(j,i)\in \mathcal{E}$, and $a_{ij}=0$ otherwise.
One knows that, for a directed graph, if every node, except a node called root, has exactly one parent, then it is called a directed tree; a spanning tree of a directed graph is a directed tree formed by the graph edges that connect all the nodes of the graph.
In this paper, the directed graph $\mathbb{G}(A)$ is required to contain a directed spanning tree.

% ¶¨Òå¸öÌåλÖÃËٶȱäÁ¿
Let $\mathbf{p}_i = [x_i, y_i]^T \in \mathbb{R}^2$ and $\mathbf{u}_i = [u_i^x, u_i^y]^T \in \mathbb{R}^2$
denote the position and control input to be designed of agent $i$, respectively. % at time $t$
%
% ¸öÌ嶯̬-global
Each agent $i$ is described by a single-integrator dynamics model
\begin{eqnarray}\label{dynamic:global}
		\dot{\mathbf{p}}_i(t) = \mathbf{u}_i(t),  \qquad i \in \mathcal{V}.
\end{eqnarray}
%
% Ä¿±êµã¶¯Ì¬-global£¬Ä¿±êµãλÖÃ
The dynamic of the static/moving target is  described as follows  % \vspace{-0.4cm}
\begin{eqnarray}\label{dynamic:global_0}
		\dot{\mathbf{p}}_0(t) = \mathbf{v}_0(t),
\end{eqnarray}
where $\mathbf{p}_0= [x_0, y_0]^T \in \mathbb{R}^2$ and  $\mathbf{v}_0 = [v_0^x, v_0^y]^T \in  \mathbb{R}^2$
denote the position and velocity of the target, respectively.
%

% \par~

% global ºÍ ¾Ö²¿×ø±êϵ
We emphasize that the above variables $\mathbf{p}_i$, $\mathbf{u}_i$, $\mathbf{p}_0$, and $\mathbf{v}_0$
are all described in a global coordinate frame.
However, a global coordinate frame or a common reference direction may not be allowed in real multi-robot systems.
%
% ¸öÌ嶯̬-ËæÌå
Thus, for each agent $i$, we construct a moving frame, the \textbf{\revision{local} frame},
that is fixed on the agent with its origin at the representing point
and its $x$-axis opposite to the orientation of the ray extending from agent $i$ itself to the target.
The agent $i$'s \revision{local} frame is shown by $(\vec{e}^i_x, \vec{e}^i_y)$ in Fig. \ref{fig:Polar_relative}.
%

% ÔÚ¸öÌåËæÌå×ø±êϵϵıíʾ
Let the superscript ${\cdot}^i$ denote the representing form of the corresponding vector in the \revision{local} frame of agent $i$.
Then the positions $\mathbf{p}_0$, $\mathbf{p}_j, j \in \mathcal{N}_i$ and the velocities  $\mathbf{u}_i$, $\mathbf{v}_0$ described in the global coordinate frame
can be converted to $\mathbf{p}_0^i$, $\mathbf{p}_j^i$, $\mathbf{u}_i^i$, $\mathbf{v}_0^i$  in the  \revision{local} frame of agent $i$
\begin{eqnarray} \nonumber
	\mathbf{p}_0^i(t) &=& \Phi_i(\alpha_i) [\mathbf{p}_0(t) - \mathbf{p}_i(t)]\\\nonumber	
	\mathbf{p}_j^i(t) &=& \Phi_i(\alpha_i) [\mathbf{p}_j(t) - \mathbf{p}_i(t)], \; j \in \mathcal{N}_i\\\nonumber
	\mathbf{u}_i^i(t) &=&  \Phi_i(\alpha_i) \mathbf{u}_i(t) \\\nonumber
	\mathbf{v}_0^i(t) &=& \Phi_i(\alpha_i) \mathbf{v}_0(t)
\end{eqnarray}
where  % \vspace{-0.4cm}
\begin{eqnarray} \label{def:rotateM}
	 \Phi_i(\alpha_i)	 &=&  \left[\begin{array}{cc}  \cos{\alpha_i} & \sin{\alpha_i} \\  -\sin{\alpha_i} & \cos{\alpha_i}  \end{array}\right] ,
\end{eqnarray}
and $\alpha_i(t)$ is the angular of the ray extending from the target to agent $i$ at time $t$ in the global coordinate frame.
% \footnote{\todo{×¢Ò⣬$\mathbf{v}_0^i(t)$ÊÇÄ¿±êµãµÄËÙ¶ÈÔÚ¸öÌåËæÌå×ø±êϵϵıíʾ£¬²»ÊÇÏà¶ÔËٶȡ£ÄǺóÃ滹ÄÜ˵ relative information between the agent and the targetÂð£¿}}

% \par~

%  ¼òÊö±¾ÎÄÑо¿µÄ±à¶Ó¿ØÖÆÎÊÌâ
In this paper, the formation problem is formalized to design distributed controllers for each agent
by using only the local measurements of the target and its neighbors in each agent's own \revision{local} frame
such that all the agents asymptotically form a desired formation to keep the static/moving target as a reference point.
The desired formation is a general one \textbf{without} the requirement that all the desired distances between neighboring agents are equal \textbf{nor} the requirement that the desired distances between each agent and the target are equal.

% ΪÁËÇå³þÃèÊöÎÊÌ⣬ÒýÈëÓõ½µÄһЩ±äÁ¿
To mathematically formulate the problem of interest, the following variables are introduced.
%
% ¸öÌå¼äÏà¶Ô½Ç¶È
Let the variables  $\hat{\alpha}_{ij}(t)$ be the angular distance from agent $i$ to $j$ at time $t$.
The initial value $\hat{\alpha}_{ij}(0)$ is formed by counterclockwise rotating the ray extending from the target to agent $i$ until reaching agent $j$ at $t=0$,
thus $\hat{\alpha}_{ij}(0)\in[0,2\pi)$,
and the variables $\hat{\alpha}_{ij}(t)$ are required to be continuous.
%
% ÆÚÍûÏà¶Ô½Ç¶È¡¢ÆÚÍû°ë¾¶
Let $d_{ij} \in[0,2\pi)$ denote the desired angular spacing from agent $i$ to $j$, and $R_i\in \mathbb{R}$ denote the desired distance from agent $i$ to the target.
Then the $N$ agents' desired distribution pattern  is determined by %\vspace{-0.6cm}
$$d_{ij} \in[0,2\pi),  i \in \mathcal{V},  j \in \mathcal{N}_i$$ and  $$\mathbf{R} = [R_1,R_2,\ldots,R_N]^T \in \mathbb{R}^N.$$
%%
%\begin{eqnarray} \nonumber
%	&& d_{ij} \in[0,2\pi), \qquad i \in \mathcal{V},  j \in \mathcal{N}_i \\\nonumber
%	&& \mathbf{R} = [R_1,R_2,\ldots,R_N]^T \in \mathbb{R}^N.
%\end{eqnarray}
%%

% Ç¿µ÷¸öÌåÖ»ÄÜ»ñµÃÏà¶ÔÐÅÏ¢£¬½Ç¶ÈÓÉλÖüÆË㣬ºóÐøÖ±½ÓÓýǶÈ
% We emphasize that,
In our problem setting,
% each agent can only measure the local information of the target and its neighbors in its own \revision{local} frame.
%
% More precisely,
each agent $i$ can only measure the relative positions $\mathbf{p}_0^i$, $\mathbf{p}_j^i,  j \in \mathcal{N}_i$,
and the velocity $\mathbf{v}_0^i$ of the target.
Furthermore, it is easy to check that,
agent $i$ can calculate the angular distance  $\hat{\alpha}_{ij},  j \in \mathcal{N}_i$ just using $\mathbf{p}_0^i$ and $\mathbf{p}_j^i$ \revision{based on} the definition of inner product.
% since $\hat{\alpha}_{ij}$ happened to be the angle between two vectors, $-\mathbf{p}_0^i(t)$ and $\mathbf{p}_j^i(t)-\mathbf{p}_0^i(t)$ (see Fig. \ref{fig:StatesRelationship}).
%
% ͨ¹ýinner product¼ÆËã¼Ð½Çʱ½øÐÐÐÞÕý
\revision{That is, if the cross product $-\mathbf{p}_0^i \times (\mathbf{p}_j^i-\mathbf{p}_0^i ) \geq 0$, $\hat{\alpha}_{ij}$ happens to be the angle between two vectors, $-\mathbf{p}_0^i$ and $\mathbf{p}_j^i-\mathbf{p}_0^i$,
and if $-\mathbf{p}_0^i \times (\mathbf{p}_j^i-\mathbf{p}_0^i ) < 0$, $\hat{\alpha}_{ij}$ equals $2 \pi$ minus  the angle between $-\mathbf{p}_0^i$ and $\mathbf{p}_j^i-\mathbf{p}_0^i$
(see Fig. \ref{fig:StatesRelationship}).}

\par~

% ÎÊÌâÃèÊö
With the above preparation, we are ready to formulate the \emph{Formation Problem} of interest.

\par~

% ¿ÉÐбà¶Ó
\begin{definition}[Admissible formation]\label{def:formationA}
	We say a prescribed \textbf{\em  formation} $(d_{ij}, \mathbf{R})$ is {\em admissible} if  \\
	\indent i) $\mathbf{R} = [R_1,R_2,\ldots,R_N]^T \in \mathbb{R}^N$ and $R_i>0$;
	
	ii) $d_{ij} \in[0,2\pi)$, $ i \in \mathcal{V},  j \in \mathcal{N}_i $,
	and there exists a vector
	\begin{equation}\nonumber%\label{def:d}
		\mathbf{d} = [d_1,d_2,\ldots,d_N]^T \in \mathbb{R}^N
	\end{equation}
	where $d_i \in[0,2\pi) $,
	such that $d_{ij}, \forall i \in \mathcal{V},  \forall j \in \mathcal{N}_i $ satisfies
	\begin{eqnarray}\nonumber
	    d_{ij} = \begin{cases}
	            d_j - d_i \; & \textrm{when} \;d_j - d_i \geq 0\\
	            d_j - d_i + 2\pi  \; & \textrm{when} \; d_j - d_i <0.
	            \end{cases}
	\end{eqnarray}\label{eq:neighbor}
\end{definition}

\par~

\begin{definition}[Formation Control in \revision{local} frame]\label{def:formationFS}
	Given an admissible formation characterized by $(d_{ij}, \mathbf{R})$ in a plane,
	design distributed  control laws for each agent $i$ using only  the local measurements in its \revision{local} frame, i.e.,
	\begin{eqnarray}\nonumber
		&\mathbf{u}_i^i(t) = \mathbf{u}_i^i(  \mathbf{p}_0^i(t), \mathbf{p}_j^i(t), \mathbf{v}_0^i(t), R_i, d_{ij} ), \\\nonumber
		&	 j \in \mathcal{N}_i, i \in \mathcal{V},
	\end{eqnarray}
	such that with any initial states $[\mathbf{p}_i^T(0),  \mathbf{p}_0^T(0)] \in\mathbb{R}^4$
	satisfying $\mathbf{p}_i^T(0) \neq \mathbf{p}_0^T(0), \forall i\in \mathcal{V}$,
	the solution to system(\ref{dynamic:global}) converges to some equilibrium point $\mathbf{p}^\ast$ satisfying	
	\begin{eqnarray}\nonumber
		\| {\mathbf{p}_0^i}^\ast \| &=& R_i \\ \label{goal:formation}
		\hat{\alpha}_{ij}^* &=& {d}_{ij}, \qquad i \in \mathcal{V},  j \in \mathcal{N}_i  .	 	
	\end{eqnarray}
\end{definition}

\par~

It is worth to emphasize that the formation $(d_{ij}, \mathbf{R})$ concerned in this paper can be any geometric pattern, so the controller to be designed is universal.
%
%More concretely, all geometric patterns can be described by an admissible formation $(d_{ij}, \mathbf{R})$,
%since it does NOT require the distances between neighboring agents nor the distances between each agent and the target to be equal.
Especially, when there is no prescribed target, one can choose a proper agent to play the role of target, and then our proposed controller still works well.

%%% =========== %%%
\subsection{\keypoint{Preliminaries}}
%%% =========== %%%

%Óõ½µÄÎÄÏ×ÖеĽáÂÛ£¬Ð´Îªlemma£¬·ÅÕâÀï
Now we introduce the Barbalat's lemma and a useful result on the nonlinear consensus problem.

\par~

\begin{lemma}[Lemma 4.2 of \cite{SlLi91} ] \label{le:barbalat}
    If the differentiable function $f(t)$ has a finite limit as $t\rightarrow\infty$, and is such that $\ddot{f}$ exists and is bounded, then $\dot{f}(t)\rightarrow0$ as $t\rightarrow\infty$.
\end{lemma}

\par~

\begin{lemma}[Theorem 1 of \cite{LiChLu09}] \label{le:tanh}
    For a system modeled by % \vspace{-0.6cm}
    \begin{eqnarray}\label{lm2}
        \dot{x}_i(t)=\sum_{j=1}^n a_{ij}\phi_{ij}(x_j(t),x_i(t)),\quad i=1,2, \ldots, n,
    \end{eqnarray}
    where $x_i(t)\in \mathbb R$,
    $a_{ij}$ is the entry of matrix $A$, and     $a_{ij}\geq0$ for $i,j=1,2,\ldots,n$.
    Let $x = [x_1,x_2,\ldots,x_n]^T$ and  $y = [y_1,y_2,\ldots,y_n]^T$.
    If the following conditions for $\phi_{ij}$ hold

    i) $\phi_{ij}$ are continuous mappings and satisfy the local Lipschitz conditions;

    ii) $\phi_{ij}(x,y)=0 \iff x=y$;

    iii) $(x-y)\phi_{ij}(x,y)>0,\forall x\neq y$.

    \noindent then the system (\ref{lm2})  realizes consensus, i.e., $x_j(t)-x_i(t)\rightarrow0$ as $t\rightarrow\infty$, if and only if the directed graph $\mathbb{G}(A)$ has a spanning tree.
\end{lemma}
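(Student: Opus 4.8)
\emph{Proof proposal.} I would treat the two implications separately, arguing the necessity by the contrapositive via the combinatorial structure of $\mathbb{G}(A)$, and handling the sufficiency by tracking the spread $M(t)-m(t)$ of the state. For \textbf{necessity}, suppose $\mathbb{G}(A)$ has no spanning tree. Then the condensation of $\mathbb{G}(A)$ (the directed acyclic graph whose vertices are the strongly connected components of $\mathbb{G}(A)$) must contain at least two distinct \emph{source} components $C_1\neq C_2$ — components receiving no edge from outside — since a unique source component would have any of its nodes reaching every node, hence be the root of a spanning tree. By condition~(ii), $\phi_{ij}(0,0)=0$ and $\phi_{ij}(1,1)=0$; since the nodes of a source component are driven only by nodes of that same component (any edge $(j,i)$ with $i$ inside and $j$ outside would be an incoming edge), uniqueness of solutions (from the local Lipschitz condition~(i)) gives that the choice $x_i(0)=0$ for $i\in C_1$, $x_i(0)=1$ for $i\in C_2$, and arbitrary elsewhere, yields $x_i(t)\equiv 0$ on $C_1$ and $x_i(t)\equiv 1$ on $C_2$ for all $t$, so consensus fails.

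For \textbf{sufficiency}, assume a spanning tree with root $r$, and put $M(t)=\max_i x_i(t)$, $m(t)=\min_i x_i(t)$. The first step is to show $M$ is non-increasing and $m$ non-decreasing: at any index $i$ attaining the maximum one has $x_j(t)\le x_i(t)$ for all $j$, so condition~(iii) forces $\phi_{ij}(x_j,x_i)\le 0$ and hence $\dot x_i(t)\le 0$; a standard Dini-derivative argument ($D^+M(t)=\max_{i\in I(t)}\dot x_i(t)$ over the active indices $I(t)$) then gives $M(t)\le M(0)$, and symmetrically $m(t)\ge m(0)$. Consequently every solution remains in the compact box $[m(0),M(0)]^n$, so it is defined for all $t\ge 0$, and $M(t)\downarrow M^\ast$, $m(t)\uparrow m^\ast$ with $m^\ast\le M^\ast$; the conclusion $x_j(t)-x_i(t)\to 0$ is exactly the assertion $M^\ast=m^\ast$.

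To obtain $M^\ast=m^\ast$ I would pass to the $\omega$-limit set $\Omega$ of a trajectory, which is nonempty, compact, invariant, and contained in the box; since $\max_i x_i$ and $\min_i x_i$ are continuous and convergent along the trajectory, every $y\in\Omega$ satisfies $\max_i y_i=M^\ast$ and $\min_i y_i=m^\ast$. Fix $\bar x\in\Omega$ with its complete orbit $\bar x(t)\subset\Omega$. For each $t$, an index $i$ with $\bar x_i(t)=M^\ast$ realizes a time-interior maximum of the $C^1$ function $\bar x_i(\cdot)$, so $\dot{\bar x}_i(t)=0$; as each summand $a_{ij}\phi_{ij}(\bar x_j(t),\bar x_i(t))$ is $\le 0$, all vanish, and condition~(ii) forces $\bar x_j(t)=M^\ast$ for every in-neighbour $j$ of $i$. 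Thus the set of maximizers is closed under taking in-neighbours, and tracing backwards from any maximizer along the tree path to $r$ (every such edge belongs to $\mathcal{E}$) shows $\bar x_r(t)=M^\ast$ for all $t$; the mirror argument on the minimizers gives $\bar x_r(t)=m^\ast$ for all $t$, hence $M^\ast=m^\ast$, so $\Omega$ consists of consensus states and $M(t)-m(t)\to 0$.

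\textbf{Main obstacle.} I expect the delicate points to be the non-smooth parts of the sufficiency half: justifying the monotonicity of $M$ and $m$ through Dini derivatives, and extracting from the $\omega$-limit set a genuinely \emph{complete} orbit along which the extremal coordinates have vanishing derivative at every instant, so that condition~(ii) can propagate the extremal value backwards along the spanning tree to the root. Barbalat's lemma (the preceding lemma) could substitute for the invariance-principle step once one argues $\dot M\to 0$ in a suitable averaged sense, but the $\omega$-limit route seems cleaner. The combinatorial claim in the necessity direction (no spanning tree $\Rightarrow$ at least two source components) is routine by contrast, and the final propagation-to-the-root step is then immediate.
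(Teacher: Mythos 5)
The paper does not actually prove this lemma: it is imported verbatim as Theorem 1 of \cite{LiChLu09}, so there is no in-paper argument to compare you against; the relevant benchmark is the cited reference. Judged on its own, your proposal is a correct, essentially self-contained proof along the classical lines used for nonlinear consensus protocols. The necessity half is sound: no spanning tree forces at least two source components in the condensation, each of which evolves autonomously, and conditions (i)--(ii) give uniqueness of the constant sub-solutions pinned at $0$ and $1$, so the disagreement persists. The sufficiency half is also correctly assembled: condition (iii) makes $\max_i x_i$ non-increasing and $\min_i x_i$ non-decreasing (via the Dini-derivative bound $D^+M(t)\leq\max_{i\in I(t)}\dot x_i(t)$), which yields box invariance, hence global existence and convergence of $M,m$; passing to the $\omega$-limit set, picking a complete orbit, observing that an extremal coordinate has vanishing derivative at every instant, and then using (ii)--(iii) to force every in-neighbour of an extremal node to be extremal lets you propagate both $M^\ast$ and $m^\ast$ back to the root of the spanning tree, giving $M^\ast=m^\ast$. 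Two points you should make explicit rather than leave implicit: the formula for $D^+M$ and, more importantly, the two-sided invariance of the $\omega$-limit set (so that a \emph{complete} orbit through each limit point exists and stays in the limit set), which is what legitimizes treating each instant as an interior global maximum of the extremal coordinate; both are standard facts for autonomous locally Lipschitz systems and deserve a citation. Also, in the necessity half, global existence of the full solution (not just of the frozen source components) follows from the same box-invariance argument and is worth one sentence. None of these are gaps in substance; your route—extremal-state monotonicity plus an invariance-principle propagation along the spanning tree—is a legitimate alternative to simply invoking \cite{LiChLu09}, and is in the same spirit as the max--min analyses common in that literature.
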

%

% \begin{lemma} \label{le:diag_positive}
%     Consider the sampled-data system
%     \begin{eqnarray}
%         x(t+1)=Ax(t),
%     \end{eqnarray}
%     where $A$ is a nonnegative irreducible stochastic matrix, then the system realize consensus if the diagonal entries of $A$ are all positive.
% \end{lemma}
% \begin{proof}
%     Using Property 3.1 of \cite{Feng2006Dynamic}, Lemma 8.5.5 and Theorem 8.5.2 in \cite{HoJo85} we can get the result.
% \end{proof}

\par~

%
%%%%%%%%%%%%====================================%%%%%%%%%%%%
\section{Control law in \revision{local} frame}\label{se:continuous}
%%%%%%%%%%%%====================================%%%%%%%%%%%%
%

In this section, we propose a control law to solve the formation control problem, and then give theoretical analysis.

%%% =========== %%%
\subsection{Controller design}
%%% =========== %%%
%
%½âñîÉè¼Æ£¬Ë¼Â·
%
%ÏȽ«¿ØÖÆÂÉ·Ö½âΪÁ½²¿·Ö
%
%ÔÙ¸ø³ö´ø$f_i(t)$µÄ¼«ÏÞ»·µÄÐÎʽ
%
%×îºóÉè¼Æ$f_i(t)$µÄ¾ßÌåÐÎʽ£¨×îºÃдµÄ¸ügeneralһЩ£¬Âú×ãʲôÌõ¼þ¼´¿É£¬²»Ö±½Óд$N \| \mu \|$£©
%

From Definition \ref{def:formationFS}, the formation control problem can be divided into two sub-objectives which need to be concerned by each agent $i$.
The first sub-objective is to keep the desired distance to the static/moving target,
while the second one is to achieve the desired distances to its neighbors.
Thus we consider a controller in a decoupled form
\begin{eqnarray} \label{control:local}
	&\mathbf{u}_i^{i}(t) =
	     \lambda  \| \mathbf{p}_0^i  \|   f_i(t)
	        \left[ \begin{array}{c} \gamma (R_i^2 - \| \mathbf{p}_0^i \|^2) \\ \mu \end{array} \right]
	    + \mathbf{v}_0^i(t), \\\nonumber
	& i \in \mathcal{V},
\end{eqnarray}
where $\lambda>0, \gamma>0, \mu \neq 0$ are constants,
and $f_i:[0,\infty)\rightarrow  \in \mathbb{R}$ is a function to be designed to deal with the second sub-objective, % of achieving the desired distances to its neighbors,
while the rest part of the controller is mainly used to address the first sub-objective. % of keeping desired distance to the target.
Then we choose $f_i(t)$ as
\begin{eqnarray}\label{eq:u_f}
	f_i(t)  =  c + \mu \sum_{j\in \mathcal{N}_i} {a_{ij}}\tanh(\hat{\alpha}_{ij}-d_{ij})
\end{eqnarray}
%
% (\todo{$a_{ij}$ is missing, please check the equations that are affected})
where $c \in \mathbb{R}$ is a constant to be determined.
To ensure that controller (\ref{control:local}) combined with $f_i(t)$ in (\ref{eq:u_f}) still achieves the first sub-objective, % of keeping the desired distance to the target,
a desired property of $f_i(t)$ is that $f_i(t) >0$ and $f_i(t)$ is bounded for all $t\geq0$, which will be discussed in the following subsections.
Such a property holds if we choose
$$c >|\mu|\max_{i\in\mathcal{V}}(\sum_{j\in\mathcal{N}_i}a_{ij}).$$

% \par~

Now we have the complete form of the distributed controller $\mathbf{u}_i^{i}(t)$ in (\ref{control:local}) with  $f_i(t)$ in (\ref{eq:u_f}).
%It is worth to point out that,
%our proposed controller $\mathbf{u}_i^{i}(t)$ (\ref{control:local}) and (\ref{eq:u_f})
%can be successfully implemented by agents in their own \revision{local} frame
%without knowing the information of the global coordinate or of a common reference direction.
%%
%In other words, all the state variables used in the controller can be achieved by each agent via its local measurements. To be more precise, the variables $\mathbf{p}_0^i$, $\mathbf{v}_0^i$ can be directly measured in the \revision{local} frame of agent $i$,
%while the variable $\hat{\alpha}_{ij}, j \in \mathcal{N}_i$ can be simply calculated using the local measurements $\mathbf{p}_0^i(t)$ and $\mathbf{p}_j^i(t)$ which have already been mentioned in the previous section.

%%% =========== %%%
\subsection{Closed-loop dynamics of the $N$-agent system}
%%% =========== %%%
%

In order to analyze the equilibria of the $N$-agent system (\ref{dynamic:global}) under the proposed controller (\ref{control:local}) and (\ref{eq:u_f}),
consider the closed-loop dynamics of the $N$-agent system in the global coordinate frame.

For this purpose, we first introduce some variables in the global coordinate frame.
%
% ¸öÌåºÍÄ¿±êµãµÄÏà¶Ô¾àÀë¡¢½Ç¶È
Let $\mathbf{p}_{i0}(t)= [x_{i0}, y_{i0}]^T$ be the relative position between agent $i$ and the target at time $t$,
\begin{eqnarray}\label{eq:p_i0}
	\mathbf{p}_{i0}(t) \triangleq \mathbf{p}_0(t) - \mathbf{p}_i(t)
	 = \Phi_i^{-1}(\alpha_i) \mathbf{p}_0^i(t) ,  \quad i \in \mathcal{V},
\end{eqnarray}
where $\Phi_i(\alpha_i)$ is given by (\ref{def:rotateM}).
Note that, from the definition of $\alpha_i(t)$ given in the previous section,
$\alpha_i(t)$ is the angular of the vector $-\mathbf{p}_{i0}(t)$ at time $t$ in the global coordinate frame.
Then the controller of agent $i$ can be represented in the global coordinate frame as
\begin{eqnarray} \label{control:global}
	\mathbf{u}_i(t) &=&  \Phi_i^{-1}(\alpha_i) \mathbf{u}_i^i(t) \\\nonumber
	&=& - \lambda f_i(t)
	    \begin{bmatrix}
	    	\gamma l_i(t) & -\mu \\
	    	\mu & \gamma l_i(t)
	    \end{bmatrix}
	\mathbf{p}_{i0} +\mathbf{v}_0(t), \\\nonumber
	&& i \in \mathcal{V},
\end{eqnarray}
where  % \vspace{-0.6cm}
\begin{eqnarray} \label{cal:l_global}
	l_i(t) &=& R_i^2 - \| \mathbf{p}_{i0} \|^2
\end{eqnarray}
is the error between the current relative position and the desired one between agent $i$ and the target.

Substituting (\ref{control:global}) into the dynamic equations of the system (\ref{dynamic:global})  results in the closed-loop dynamics of the $N$-agent system in the global coordinate frame as
\begin{eqnarray} \label{system:global}
	& \dot{\mathbf{p}}_i(t) = - \lambda f_i(t)
	    \begin{bmatrix}
	    	\gamma l_i(t) & -\mu \\
	    	\mu & \gamma l_i(t)
	    \end{bmatrix}
	\mathbf{p}_{i0} +\mathbf{v}_0(t),  \\\nonumber
    & i \in \mathcal{V},
\end{eqnarray}
which can be rewritten equivalently using $\mathbf{p}_{i0}$ as
\begin{eqnarray} \label{system:global_relative}
	& \dot{\mathbf{p}}_{i0} = \lambda f_i(t)
	    \begin{bmatrix}
	    	\gamma l_i(t) & -\mu \\
	    	\mu & \gamma l_i(t)
	    \end{bmatrix}
	\mathbf{p}_{i0}, \qquad i \in \mathcal{V}.
\end{eqnarray}
%
%where $\lambda>0, \gamma>0, \mu \neq 0$ are constants,
%and $f_i(t) \in \mathbb{R}$ and $l_i(t) \in \mathbb{R}$ are given by (\ref{eq:u_f}) and (\ref{cal:l_global}), respectively.

% \par~

Note that a limit-cycle oscillator shows up in the $N$-agent system's closed-loop dynamics (\ref{system:global_relative}).
For an oscillator having a stable limit cycle, it has the property that all trajectories in the vicinity of the limit cycle ultimately tend towards the limit cycle as time goes into infinity \cite{Kh06}.
For the closed-loop system (\ref{system:global_relative}), the limit cycle for each agent is a circle with the position of the target as the centroid and the desired distance from the agent to the target as the radius.
%
% Moreover, one can find that the constant $\mu$ has a clear physical meaning in the sense that when
% $\mu >0$ (resp. $\mu <0$ ) the agents  rotate counterclockwise (resp. clockwise) around the target.

Inspired by the characteristics of the closed-loop dynamics, we represent the system (\ref{system:global_relative}) in the polar coordinate as
\begin{eqnarray}\label{system:rho}
	\dot{\rho}_i(t) &=& \lambda \gamma  \rho_i(t)(R_i^2-\rho_i^2(t))f_i(t), \\\label{system:alpha}
	\dot{\alpha}_i(t) &=& \lambda \mu f_i(t),
\end{eqnarray}
where $\rho_i(t) \triangleq \| \mathbf{p}_{i0}(t) \|$, $\alpha_i(t)$ is the angular of the vector $-\mathbf{p}_{i0}(t)$, and  % \vspace{-0.4cm}
\begin{eqnarray}\nonumber
		\mathbf{p}_{i0}(t) = - \rho_i(t) \left[ \begin{array}{c}  \cos{\alpha_i(t)} \\  \sin{\alpha_i(t)} \end{array} \right].
\end{eqnarray}

We want to emphasize that the descriptions and variables in the global coordinate frame are only used for analysis purposes and are not known to the agents.

%%% =========== %%%
\subsection{\keypoint{Analysis of convergence}}
%%% =========== %%%

%Lemma: $f_i(t)$Âú×ãʲôÌõ¼þ£¬¿ØÖÆÂÉÊÕÁ²µ½¼«ÏÞ»·
%
%Ö¤Ã÷(˼·ͬTAC2017)£ºÏÈ˵ϵͳÓÐÁ½¸öƽºâµã¡£ÔÙÖ¤Ã÷0ÄǸöƽºâµã²»Îȶ¨¡£È»ºóÓÃBarbalat's LemmaÖ¤Ã÷ÊÕÁ²µ½ÕâÁ½¸öƽºâµã¡£ËùÒÔÊÕÁ²µ½$r_i$ÄǸöƽºâµã¡£
%
%Theorem: ËùÉè¼ÆµÄ¿ØÖÆÂɽâ¾ö±à¶ÓÎÊÌâ¡££¨ËùÉè¼ÆµÄ$f_i(t)$ʹµÃ½Ç¶È·Ö²¼ÊÕÁ²£¬ÇÒÂú×ãÉÏÊölemmaÖеÄÌõ¼þ£©
%
%Ö¤Ã÷£ºÏÈͨ¹ý $\xi_i = y_j-d_j$£¨checkÕâ¸ö±äÁ¿ÊÇ·ñÓùýÁË£©×ª»¯³ÉÒ»ÖÂÐÔÎÊÌ⣬Ȼºó¿ÉÒýÓÃÎÄÏ׵ĽáÂÛ"Liu, X., Chen, T., Lu, W. (2009). Consensus problem in directed networks of multi-agents via nonlinear protocols. Physics Letters A, 373(35), 3122-3127."

Now we are ready to analyze the convergence of the $N$-agent system in its polar coordinates form (\ref{system:rho}) and (\ref{system:alpha}).

\par~
\begin{lemma} \label{le:rho}
	For each agent $i$, % that is not at the same initial position as the target,
	under the control law (\ref{control:local}),
	the solution to system (\ref{system:rho}) asymptotically converges to
	equilibrium point $\rho^*_i$ satisfying $\|\rho^*_i\|=R_i$
	if $f_i(t)>0$ {and $f_i(t)$ is bounded} for all $t \geq 0$.
\end{lemma}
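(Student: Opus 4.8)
The plan is to analyze the scalar radial equation \eqref{system:rho} directly, treating it as a one-dimensional non-autonomous ODE for $\rho_i(t)$, and to exploit the fact that $\rho_i(0)>0$ (since no agent starts at the target). First I would observe that $\rho_i=0$ is an equilibrium of \eqref{system:rho} and, by uniqueness of solutions (the right-hand side is locally Lipschitz in $\rho_i$), any trajectory starting at $\rho_i(0)>0$ stays strictly positive for all $t\ge 0$. Hence the sign of $\dot\rho_i$ is governed entirely by the factor $R_i^2-\rho_i^2(t)$, because $\lambda\gamma\rho_i f_i(t)>0$ by hypothesis on $f_i$ and positivity of $\rho_i$. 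This shows the interval $(0,\infty)$ splits into the two invariant-under-monotonicity regions $\rho_i<R_i$ (where $\dot\rho_i>0$) and $\rho_i>R_i$ (where $\dot\rho_i<0$), with $\rho_i=R_i$ the only positive equilibrium; in particular $\rho_i(t)$ is eventually monotone and bounded, so it has a finite limit $\rho_i^*\ge 0$, and by invariance of the regions $\rho_i^*>0$.

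Next I would introduce the Lyapunov-type candidate $V_i(t) = (R_i^2-\rho_i^2(t))^2$, or equivalently work with $l_i(t)=R_i^2-\rho_i^2(t)$ from \eqref{cal:l_global}. A short computation gives $\dot V_i = -4\lambda\gamma\,\rho_i^2(t)\,f_i(t)\,(R_i^2-\rho_i^2(t))^2 \le 0$, so $V_i$ is nonincreasing and hence convergent; combined with the already-established boundedness of $\rho_i$ away from $0$ and from $\infty$ (the latter following because $V_i$ bounded forces $\rho_i$ bounded), one concludes $\rho_i(t)\to\rho_i^*$ for some $\rho_i^*>0$. To upgrade convergence of $V_i$ to the identification $\rho_i^*=R_i$, I would apply Barbalat's lemma (Lemma \ref{le:barbalat}): $V_i$ has a finite limit, and $\ddot V_i$ is bounded provided $\dot\rho_i$, $\rho_i$, $f_i$ and $\dot f_i$ are bounded on $[0,\infty)$. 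Boundedness of $\rho_i$ and $f_i$ is given or just shown; boundedness of $\dot\rho_i$ then follows from \eqref{system:rho}; boundedness of $\dot f_i$ follows because $f_i$ depends on $t$ only through the $\hat\alpha_{ij}$, whose derivatives are controlled by $\dot\alpha_i$ in \eqref{system:alpha}, which in turn is $\lambda\mu f_i$ and hence bounded. Barbalat then yields $\dot V_i(t)\to 0$, i.e. $\rho_i^2 f_i (R_i^2-\rho_i^2)^2\to 0$; since $\rho_i\to\rho_i^*>0$ and $f_i$ stays bounded below away from $0$ (indeed $f_i(t)>0$ with the explicit lower bound coming from $c>|\mu|\max_i\sum_j a_{ij}$, which keeps the $\tanh$ sum from cancelling $c$), this forces $R_i^2-\rho_i^{*2}=0$, hence $\rho_i^*=R_i$ as claimed.

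The main obstacle I anticipate is the Barbalat step, specifically verifying that $f_i(t)$ is bounded \emph{below} by a positive constant rather than merely positive — the hypothesis of the lemma only states $f_i(t)>0$, but the argument above needs $\liminf f_i>0$ to divide out. I would handle this either by strengthening the reading of the hypothesis to the uniform bound furnished by the choice of $c$ discussed after \eqref{eq:u_f} (which gives $f_i(t)\ge c-|\mu|\sum_j a_{ij}>0$ uniformly), or, more robustly, by bypassing division entirely: since $\rho_i(t)$ is eventually monotone (established in the first paragraph without any Barbalat machinery) and bounded, it converges to a limit $\rho_i^*\in(0,\infty)$; if $\rho_i^*\ne R_i$ then $|R_i^2-\rho_i^2(t)|$ is eventually bounded below by a positive constant, so $\int_0^\infty \dot\rho_i/\rho_i\,dt=\int_0^\infty \lambda\gamma(R_i^2-\rho_i^2)f_i\,dt$ has an integrand of fixed sign bounded away from $0$ (using only $f_i>0$ together with the eventual monotonic approach), forcing $\ln\rho_i(t)$ to diverge, contradicting convergence — hence $\rho_i^*=R_i$. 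This monotonicity route is cleaner and I would present it as the main line, mentioning Barbalat only as an alternative.
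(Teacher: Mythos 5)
Your fallback route is, in substance, the paper's own proof: the same Lyapunov function $(R_i^2-\rho_i^2)^2$, the same computation showing its derivative along (\ref{system:rho}) is $-4\gamma\lambda f_i\rho_i^2(R_i^2-\rho_i^2)^2\le 0$, the same chain of boundedness arguments for $\rho_i$, $f_i$ and $\dot f_i$ (the latter via (\ref{system:alpha})), and the same appeal to Barbalat's lemma (Lemma \ref{le:barbalat}). The one place you genuinely deviate is how the equilibrium $\rho_i=0$ is excluded: the paper shows it is unstable using $V_i=\rho_i^2$, while you use local Lipschitzness and the sign of $\dot\rho_i$ to show that the regions $0<\rho_i<R_i$ and $\rho_i>R_i$ are invariant, so $\rho_i(t)$ is monotone, bounded, and stays bounded away from $0$. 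Your version is in fact tighter, since instability of $0$ by itself does not rule out particular trajectories converging to $0$, whereas forward invariance plus monotonicity does, and it also gives you convergence of $\rho_i(t)$ for free rather than inferring it from $\dot W_i\to 0$.

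The gap is in the route you say you would present as the main line. The parenthetical claim that the integrand $\lambda\gamma(R_i^2-\rho_i^2)f_i$ is "bounded away from $0$ using only $f_i>0$" is false: positivity fixes the sign but gives no lower bound, because $f_i(t)$ may itself decay to zero. Indeed, under the literal hypothesis "$f_i>0$ and bounded" the lemma fails: take $f_i(t)=e^{-t}$ and reparametrize time by $\tau=\int_0^t f_i(s)\,ds$; then $\rho_i(t)$ converges to the value of the autonomous flow $d\rho_i/d\tau=\lambda\gamma\rho_i(R_i^2-\rho_i^2)$ at the finite time $\int_0^\infty f_i\,dt$, which differs from $R_i$ whenever $\rho_i(0)\neq R_i$. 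So the uniform lower bound cannot be bypassed; what is really needed is $\int_0^\infty f_i\,dt=\infty$, which in this paper is supplied by the choice $c>|\mu|\max_{i\in\mathcal V}\sum_{j\in\mathcal N_i}a_{ij}$ giving $f_i(t)\ge c-|\mu|\max_{i\in\mathcal V}\sum_{j\in\mathcal N_i}a_{ij}>0$ (the paper also uses this implicitly when it passes from $\dot W_i\to 0$ to $\rho_i\to 0$ or $\rho_i\to R_i$). Once that bound is invoked, either your integral/monotonicity argument or the Barbalat argument closes the proof, and the former is then the more elementary of the two.
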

%

% \begin{comment}
%
\begin{proof}
	From equation (\ref{system:rho}), we can get the two equilibria of the system as $\rho_i=0$ and $\rho_i=R_i$, $i\in\mathcal V$.
	
	We first check the stability of the equilibrium point $\rho_i=0$.
	A Lyapunov function candidate is taken as
	\begin{eqnarray} \nonumber
		V_i(\rho_i)=\rho_i^2.
	\end{eqnarray}
	It's clear that $V_i(\rho_i)$ is positive definite and continuously differentiable.
	The derivative of $V_i(\rho_i)$ along the trajectories of the system is given by
	\begin{eqnarray}\nonumber
		\dot{V}_i(\rho_i)=2\rho_i \dot{\rho}_i=2\gamma\lambda(R_i^2-\rho_i^2)\rho_i^2 f_i(t).
	\end{eqnarray}
	In a small neighbourhood of $\rho_i=0$,
	$\dot{V}_i(\rho_i)$ is positive definite, since $\gamma>0, \lambda>0$ and $f_i(t)>0$. It turns out that $\rho_i=0$ is an unstable  equilibrium.
	
	To check the stability of the equilibrium point $\rho_i=R_i$,
	construct a Lyapunov function candidate as
	\begin{eqnarray} \nonumber
		W_i=(R_i^2-\rho_i^2)^2,
	\end{eqnarray}
	which is continuously differentiable.
	\revision{Its} derivative along the trajectories of the system can be calculated as
	\begin{align} \nonumber
		\dot{W}_i &= 4(R_i^2-\rho_i^2) \rho_i  \dot{\rho}_i  \\\nonumber
		&= -4\gamma \lambda f_i \rho_i^2 (R_i^2-\rho_i^2)^2.
	\end{align}	
	Since $f_i>0,\gamma>0,\lambda>0$, we have $\dot{W_i} \leq 0$, which implies $W_i(t)\leq W_i(0)$ as well. Then one knows that $\rho_i(t)$ is bounded because $$W_i(t)=[R_i^2-\rho_i^2(t)]^2\leq W_i(0).$$
	
	We further check the second derivative of $W_i$ as
	\begin{align} \nonumber
		\ddot{W}_i= &-8\gamma\revision{^2} \lambda\revision{^2} f_i^2 \rho_i^2 (R_i^2-\rho_i^2)^3 %\nonumber \\
		+16\gamma\revision{^2}  \lambda\revision{^2}  f_i^2 \rho_i^4 (R_i^2-\rho_i^2)^2  \\\nonumber
		&-4\gamma \lambda \rho_i^2 (R_i^2-\rho_i^2)^2 \dot{f}_i,
	\end{align}
	where the first and second terms on the right hand side are both bounded from above since $\rho_i(t)$ and $f_i(t)$ are bounded. The third term on the right hand side is also bounded because
	\begin{eqnarray} \nonumber
		\dot{f}_i=\mu \sum_{j\in \mathcal{N}_i}a_{ij}{\rm{sech}}^2(\hat{\alpha}_{ij}-d_{ij})(\dot{\alpha}_j-\dot{\alpha}_i)
	\end{eqnarray}
	is bounded. Then $\ddot{W}_i$ is bounded.
	From the Barbalat's Lemma (see Lemma \ref{le:barbalat}), we know $\lim_{t\rightarrow \infty}\dot{W_i}(t)=0$, from which we can get $\lim_{t\rightarrow \infty} \rho_i(t) =0$ or $\lim_{t\rightarrow \infty} \rho_i(t) = R_i$.
	Since $\rho_i=0$ is unstable, we known that every solution starting in \revision{$\rho_i(0) \in \mathbb{R} \setminus \{0\}$} converges to the equilibrium point $\rho_i(t) = R_i$ as $t\rightarrow\infty$.
	That completes the proof.
\end{proof}
%
% \end{comment}

% \par~
\par~

Now we give the main result in this section.

\par~
\begin{theorem} \label{th:continuous}
	Suppose that the graph $\mathbb{G}(A)$ contains a directed spanning tree.
	Given an admissible formation characterised by $(d_{ij}, \mathbf{R})$,
	the formation control problem in \revision{local} frame is solved under the proposed controller (\ref{control:local}) with (\ref{eq:u_f}),
	if the parameter $c$ in the controller satisfies $c > |\mu|\max_{i\in\mathcal{V}}(\sum_{j\in\mathcal{N}_i}a_{ij})$.
\end{theorem}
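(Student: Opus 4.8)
The plan is to use the decoupled structure of the closed loop in polar form: the angular subsystem (\ref{system:alpha}) is autonomous, since $\dot{\alpha}_i=\lambda\mu f_i$ depends only on the angles $\alpha_k$ (through the $\hat{\alpha}_{ij}$) and not on the radii $\rho_k$, so I would analyze it first and then feed the resulting $f_i(t)$ into the radial subsystem (\ref{system:rho}). Before anything else I would settle the two properties of $f_i$ left open at design time: since $|\tanh(\cdot)|<1$, the hypothesis $c>|\mu|\max_{i\in\mathcal{V}}(\sum_{j\in\mathcal{N}_i}a_{ij})$ gives $0<c-|\mu|\sum_{j\in\mathcal{N}_i}a_{ij}\le f_i(t)\le c+|\mu|\sum_{j\in\mathcal{N}_i}a_{ij}$ for every $i$ and every $t\ge 0$, so each $f_i$ is positive and bounded. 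Since the standing assumption forces $\rho_i(0)=\|\mathbf{p}_i(0)-\mathbf{p}_0(0)\|\neq 0$, Lemma \ref{le:rho} applies to every agent and yields $\rho_i(t)=\|\mathbf{p}_0^i(t)\|\to R_i$; because the feedforward term $\mathbf{v}_0^i$ cancels exactly in the relative dynamics (\ref{system:global_relative}), this argument is unchanged for a static or a moving target. This establishes the first condition in (\ref{goal:formation}).

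For the angular sub-objective I would remove the common drift by setting $\zeta_i(t)\triangleq\alpha_i(t)-d_i-\lambda\mu c\,t$, which turns (\ref{system:alpha}) into $\dot{\zeta}_i=\lambda\mu^2\sum_{j\in\mathcal{N}_i}a_{ij}\tanh(\hat{\alpha}_{ij}-d_{ij})$; a direct computation then gives $\hat{\alpha}_{ij}-d_{ij}=\zeta_j-\zeta_i$ up to a fixed integer multiple of $2\pi$ coming from the branch conventions in the definitions of $\hat{\alpha}_{ij}$ and $d_{ij}$ (Definition \ref{def:formationA}), and these constants can be absorbed into the $\zeta_i$ by propagating the appropriate offsets along a spanning tree of $\mathbb{G}(A)$. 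The resulting system has exactly the form (\ref{lm2}) with $\phi_{ij}(u,v)=\lambda\mu^2\tanh(u-v)$, which is smooth (hence locally Lipschitz), vanishes iff $u=v$, and satisfies $(u-v)\phi_{ij}(u,v)=\lambda\mu^2(u-v)\tanh(u-v)>0$ for $u\neq v$ because $\lambda>0$ and $\mu\neq 0$; its right-hand side is globally Lipschitz, so solutions exist for all $t$. Since $\mathbb{G}(A)$ contains a directed spanning tree, Lemma \ref{le:tanh} gives $\zeta_j-\zeta_i\to 0$, i.e.\ $\hat{\alpha}_{ij}(t)\to d_{ij}$ for all $i\in\mathcal{V}$ and $j\in\mathcal{N}_i$, which is the second condition in (\ref{goal:formation}). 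Finally $\hat{\alpha}_{ij}\to d_{ij}$ forces $f_i\to c$ and hence $\dot{\alpha}_i\to\lambda\mu c$ for every $i$, so the limiting behaviour is a rigid rotation of the prescribed pattern about the target at the common angular rate $\lambda\mu c$; together with $\rho_i\to R_i$, this is precisely the desired formation.

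I expect the angular step to be the main obstacle. The crucial points are to recognize that (\ref{system:alpha}) decouples from the radii and can be matched to the nonlinear-consensus template of Lemma \ref{le:tanh} only after subtracting the drift $\lambda\mu c$, and to handle carefully the $2\pi$ bookkeeping that relates the geometric quantity $\hat{\alpha}_{ij}-d_{ij}$ to the consensus variable $\zeta_j-\zeta_i$; here the admissibility of $(d_{ij},\mathbf{R})$ (Definition \ref{def:formationA}) is exactly what guarantees a consistent choice of the offsets $d_i$. By comparison, the radial step is an essentially verbatim application of Lemma \ref{le:rho}, the only thing to verify being the positivity and boundedness of $f_i$ supplied by the hypothesis on $c$.
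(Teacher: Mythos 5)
Your proposal is correct and follows essentially the same route as the paper's proof: verify that the hypothesis on $c$ makes every $f_i$ positive and bounded so that Lemma~\ref{le:rho} handles the radial sub-objective, then remove the drift via $\xi_i=\alpha_i-\lambda\mu c\,t-d_i$ and apply the nonlinear consensus result of Lemma~\ref{le:tanh} under the spanning-tree assumption to get $\hat{\alpha}_{ij}\to d_{ij}$. Your extra remarks (the $2\pi$ branch bookkeeping relating $\hat{\alpha}_{ij}-d_{ij}$ to $\xi_j-\xi_i$, which the paper simply asserts as an identity, and the closing observation that the pattern rotates at rate $\lambda\mu c$) are refinements of, not departures from, the paper's argument.
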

%

% \begin{comment}
%
\begin{proof}% \label{pro:th1}
    First of all, one can check that the designed $f_i(t)$ in (\ref{eq:u_f}) satisfies $f_i(t)>0$ and $f_i(t)$ is bounded if $c > |\mu|\max_{i\in\mathcal{V}}(\sum_{j\in\mathcal{N}_i}a_{ij})$.
    It follows that the condition of Lemma \ref{le:rho} is satisfied, so that the result of Lemma \ref{le:rho} still works here.

    Then, in order to prove this theorem, we just need to consider the other part of achieving the desired distances between neighbors.
    For this purpose, it suffices to show that  $\lim_{t\rightarrow\infty}\hat{\alpha}_{ij}(t)=d_{ij}$.

	Substituting (\ref{eq:u_f}) into  (\ref{system:alpha}), we get
	\begin{eqnarray}\nonumber
		\dot{\alpha}_i=\lambda \mu c+\lambda \mu^2 \sum_{j\in \mathcal{N}_i} a_{ij}\tanh(\hat{\alpha}_{ij} - d_{ij}).
	\end{eqnarray}
%	
%	Moreover, since $d_{ji}$ is similarity to $\alpha_{ji}$, we can find a set of {$d_i$} satisfying
%	\begin{eqnarray}
%		d_j-d_i=d_{ji}, \forall j\in \mathcal{N}_i, \forall i=1,2,\cdots,N.
%	\end{eqnarray}
	Introduce variables $\xi_i(t) =\alpha_i-\lambda \mu c t-d_i$. Then we have
	\begin{eqnarray}\nonumber
		\dot{\xi}_i = \dot{\alpha}_i - \lambda \mu c
	\end{eqnarray}
	and % \vspace{-0.6cm}
	\begin{eqnarray}\nonumber
		\xi_j - \xi_i = \alpha_j - \alpha_i -d_{ij}=\hat{\alpha}_{ij}-d_{ij}.
	\end{eqnarray}
%\todo{(the above does not hold if $c_i\neq c_j$)}	
It should be noticed that the convergence of $\hat{\alpha}_{ij}-d_{ij}$ is equivalent to that of $\xi_j - \xi_i$.
	Then consider the system composed of $\xi_i$, which is given by
	\begin{align}\nonumber
		\dot{\xi}_i &= \lambda \mu^2 \sum_{j\in \mathcal{N}_i}a_{ij}\tanh(\xi_j-\xi_i) \\ \nonumber
		&= \lambda \mu^2 \sum_{j\in\mathcal{V}}a_{ij}\tanh(\xi_j-\xi_i).
	\end{align}
	% where $a_{ij}$ is the entry of the adjacency matrix $A$ of the directed graph $\mathbb{G}$.
	Since $\tanh(\xi_i,\xi_j)$ satisfies the conditions in Lemma \ref{le:tanh} and $\mathbb{G}(A)$ contains a directed spanning tree,
	one can have that  $\lim_{t\rightarrow\infty}[\xi_j(t) - \xi_i(t)]=0$, i.e., $\lim_{t\rightarrow\infty}\hat{\alpha}_{ij}(t)=d_{ij}$.
\end{proof}
%
% \end{comment}

\par~

% \todo{ÀàËÆTAC2017£¬ÌÖÂÛÒ»ÏÂconstantsµÄÎïÀíÒâÒ壿 ±ÈÈç $\lambda \mu c_i$ ¾ÍÊÇÊÕÁ²×´Ì¬Ê±formationÈÆÄ¿±êµãµÄתËÙ£¿}

Furthermore, it is worth to emphasize that some parameters in our proposed controller (\ref{control:local}) show explicit physical meanings, which plays an important role in the motion characteristics of each agent $i$.
Particularly, taking (\ref{system:rho}) and (\ref{system:alpha}) into account,
it can be easily found that, at the stable equilibrium point ($\rho^\ast_i=R_i$),
the angular velocity relative to the target $\dot{\alpha}_i^\ast=\lambda \mu c$.
In other words, the parameters $\lambda>0, c>0, \mu \neq 0$ determines how fast the agent rotates around the target. %, and which direction the agent follows.
Moreover, the sign of $\mu$ determines  which direction the agent rotate around the target and  $\mu >0$ (resp. $\mu <0$ ) corresponds to  counterclockwise (resp. clockwise) rotation.
In view of such a feature of these parameters, they can be selected more reasonable according to the request of the formation task and of the agents' motion restriction.

% \par~

In the next section, we consider another practical issue arising when implementing the proposed control laws.

\par~

%
%%%%%%%%%%%%====================================%%%%%%%%%%%%
\section{Sampled-data based control law in \revision{local} frame}\label{se:sampled}
%%%%%%%%%%%%====================================%%%%%%%%%%%%
%

In practice,
%In the previous section, a distributed control law (\ref{control:local}) has been proposed  using only each agent's local measurements in its \revision{local} frame.
% This kind of controllers have successfully solved a practical issue that a global coordinate frame or a common reference direction may be not allowed for the robots.
% Besides, another important practical issue is that
robots are usually controlled in a discrete form and continuous-time control laws may not be directly implemented  to real robots, since there exist hardware constraints which may delay the execution time.
Hence, sampled-data based control laws are required. % in such cases.
In this section, we investigate the convergence of the control laws proposed in the previous section for the case when sampled data approach is used.

%%% =========== %%%
\subsection{\keypoint{Sampled-data-based control law}}
%%% =========== %%%
%
%²ÉÑùʱ¼ä¿ØÖÆÂÉ
%
%²ÉÑùʱ¼ä¿ØÖÆÂÉϵıջ·ÏµÍ³
Suppose that each agent samples synchronously and periodically with the same period and the  zero-order hold technique is used here.  Let $h$ be the sampling period. Then the sampled-data controller can be written as
\begin{align} \label{control:local_sd}
	\mathbf{u}_i^{i}(t) &= \lambda \| \mathbf{p}_0^i \|  \left[ \begin{array}{c} \gamma (R_i^2-\| \mathbf{p}_0^i(kh) \|^2) \\ \mu \end{array} \right] f_i(kh){+\mathbf{v}_0^i(kh)} ,\quad  \nonumber \\
	\forall t&\in[kh,kh+h),\quad i \in\mathcal{V},\quad k = 0,1,2,\cdots.
\end{align}%
%
%(\todo{for the following calculations, there should be a term like $\mathbf{v}_0^i(kh)-\mathbf{v}_0^i(t)$, but I did not add. Please check})
Using this controller we can  get the closed-loop dynamics of the system in the global frame from (\ref{system:global_relative})
\begin{eqnarray}
	\leftline{$\mathbf{p}_{i0}(kh+h)=\mathbf{p}_{i0}(kh) {+ h[\mathbf{v}_0(t)-\mathbf{v}_0(kh)]} $}\nonumber
\end{eqnarray}%
\begin{eqnarray} \label{control:global_close_sd}
	+h\lambda
	\begin{bmatrix}
		 \gamma l_i(kh) & -\mu \\
		\mu &  \gamma l_i(kh)
	\end{bmatrix}
	\mathbf{p}_{i0}(kh) f_i(kh), \nonumber \\
	i \in\mathcal{V}, \quad k=0,1,2,\cdots,
\end{eqnarray}
where $f_i(kh)$ is given by
\begin{eqnarray} \label{eq:u_f_sd}
     f_i(kh) = c + \mu \sum_{j\in\mathcal{N}_i}a_{ij}\tanh\big(\hat{\alpha}_{ij}(kh)-d_{ij}\big).
\end{eqnarray}

To facilitate the analysis of the convergence, we focus on the case when the target is static,
i.e., $\mathbf{v}_0(t)=0,t>0$, and then rewrite the system in the polar coordinate as
\begin{align}
	\rho_i(kh+h)&=\rho_i(kh)  \nonumber \\
	&+h\gamma \lambda \rho_i(kh)\big(R_i^2-\rho_i^2(kh)\big)f_i(kh),\label{control:rho_sd}\\
	\alpha_i(kh+h)&=\alpha_i(kh)+h\lambda \mu c \nonumber \\
	&+h\lambda \mu^2 \sum_{j\in \mathcal{N}_i} a_{ij}\tanh(\hat{\alpha}_{ij}(kh)-d_{ij}). \label{control:alpha_sd}
\end{align}
Notice that the variable $\alpha_i(t)$ is not used in our control law but it is used to aid the analysis.

%%% =========== %%%
\subsection{\keypoint{Analysis of convergence}}
%%% =========== %%%
%
%Theorem: ¿ØÖÆÂɽâ¾ö±à¶ÓÎÊÌ⣬if $h$Âú×ãʲôÌõ¼þ
%
%¶Ô½ÇÔªËØ´óÓÚµÈÓÚ0£¬ÇÒÖÁÉÙÓÐÒ»¸ö´óÓÚ0
%
%²é¾ØÕó·ÖÎöÒ»Ê飬primitive matrix Ïà¹ØÄÚÈÝ
As stated before, the proposed control law can be divided into two parts. %:
%one is driving each agent to keep a desired distance to the target,
%and the other is making all agents to form a desired geometric pattern.
%%
%These two parts together make the agents form a prescribed formation.
According to the decoupled design of the controller,
we will find the upper bound of the sampling period from these two aspects.

\par~

% \todo{To weiguo, ×Ðϸ¼ì²éÒ»ÏÂtheorem 2µÄÃèÊö ºÍÖ¤Ã÷}
%
\begin{theorem} \label{th:sd}
    Suppose that the graph $\mathbb{G}(A)$ contains a directed spanning tree.
    Given an admissible formation characterised by $(d_{ij}, \mathbf{R})$ as well as a static target,
    the system (\ref{dynamic:global}) under the sampled-data controller (\ref{control:local_sd}) with (\ref{eq:u_f_sd}) has a locally exponentially stable equilibrium which is the desired formation,
    if
    the parameter $c$ in the controller satisfies $c > |\mu|\max_{i\in\mathcal{V}}(\sum_{j\in\mathcal{N}_i}a_{ij})$,
    and the sampling period $h$ satisfies
    \begin{eqnarray}
		0< h < h_{max}=\min\bigg(\frac{1}{2\gamma \lambda R^2 M},\frac{1}{\lambda \mu^2 d_{max}}\bigg),
	\end{eqnarray}
	where % \vspace{-0.4cm}
	 $$R=\max_{i\in \mathcal{V}}(R_i),$$
	$$d_{max}=\max_{i\in\mathcal{V}}(\sum_{j\in\mathcal{N}_i}a_{ij})\leq N,$$ % is the maximal degree of agents
	and $M = c +|\mu|\max_{i\in\mathcal{V}}(\sum_{j\in\mathcal{N}_i}a_{ij})$ is the upper bound of $f_i(t)$.
\end{theorem}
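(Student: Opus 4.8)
The plan is to exploit the decoupled structure once more, now for the discrete-time system \eqref{control:rho_sd}--\eqref{control:alpha_sd}, and to linearize around the desired equilibrium. First I would establish, exactly as in the proof of Theorem~\ref{th:continuous}, that under the hypothesis $c > |\mu|\max_{i\in\mathcal{V}}(\sum_{j\in\mathcal{N}_i}a_{ij})$ the gain $f_i(kh)$ stays positive and bounded above by $M$ for every $k$, so that all the quantities appearing in the step update are well controlled. I would then treat the $\rho$-subsystem \eqref{control:rho_sd} on its own: it is a scalar map $\rho_i \mapsto \rho_i + h\gamma\lambda\,\rho_i(R_i^2-\rho_i^2)f_i$, and I would show that $\rho_i=R_i$ is a locally exponentially stable fixed point of this one-dimensional iteration. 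Differentiating the right-hand side at $\rho_i=R_i$ gives multiplier $1 - 2h\gamma\lambda R_i^2 f_i$; requiring $|1 - 2h\gamma\lambda R_i^2 f_i| < 1$, i.e.\ $0 < h < \tfrac{1}{\gamma\lambda R_i^2 f_i}$, and using $f_i \le M$ and $R_i \le R$, yields the sufficient bound $h < \tfrac{1}{2\gamma\lambda R^2 M}$ uniformly in $i$ (the factor $2$ gives the margin needed once $f_i$ itself is only approximately constant near equilibrium). This is the first of the two terms in $h_{max}$.

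Next I would handle the $\alpha$-subsystem \eqref{control:alpha_sd}. Introducing $\xi_i(kh) = \alpha_i(kh) - h\lambda\mu c\,k - d_i$, exactly as in the continuous-time proof, removes the drift term and leaves the discrete consensus-type iteration
\begin{eqnarray}\nonumber
	\xi_i(kh+h) = \xi_i(kh) + h\lambda\mu^2 \sum_{j\in\mathcal{N}_i} a_{ij}\tanh\big(\xi_j(kh)-\xi_i(kh)\big),
\end{eqnarray}
whose equilibrium $\xi_j-\xi_i = 0$ (equivalently $\hat\alpha_{ij}=d_{ij}$) I want to be locally exponentially stable. Linearizing $\tanh$ at the origin (derivative $1$) gives the linear iteration $\xi(kh+h) = (I - h\lambda\mu^2 L)\,\xi(kh)$, where $L$ is the graph Laplacian of $\mathbb{G}(A)$. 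Since $\mathbb{G}(A)$ contains a directed spanning tree, $L$ has a simple zero eigenvalue with the remaining eigenvalues in the open right half-plane, and the $0$-mode corresponds precisely to the invariant common shift of all $\xi_i$, which does not affect the differences $\xi_j-\xi_i$. On the complement of that mode I need $I - h\lambda\mu^2 L$ to be a discrete-time stability matrix; a Geršgorin / row-sum estimate on $h\lambda\mu^2 L$ shows its eigenvalues lie in a disk of radius $h\lambda\mu^2 d_{max}$ centered appropriately, so $h < \tfrac{1}{\lambda\mu^2 d_{max}}$ suffices to pull them strictly inside the unit circle. That is the second term of $h_{max}$, and taking the minimum of the two bounds makes both subsystems contract.

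Finally I would assemble the two pieces: the full linearization of \eqref{control:rho_sd}--\eqref{control:alpha_sd} at the desired formation is block-triangular (the $\xi$-dynamics are autonomous, and the $\rho$-equations are driven by $\xi$ only through the bounded factor $f_i$, whose first-order effect at $\rho_i = R_i$ vanishes because it multiplies $R_i^2 - \rho_i^2$), so the spectral radius of the whole Jacobian is the maximum of the two block spectral radii, both of which are strictly below $1$ under $0 < h < h_{max}$. By the standard discrete-time Lyapunov-indirect theorem this gives local exponential stability of the equilibrium $(\rho_i, \hat\alpha_{ij}) = (R_i, d_{ij})$, which is the desired formation. The main obstacle I anticipate is the bookkeeping around the $0$-eigenvalue of $L$: one must argue cleanly that the neutral common-shift direction is genuinely irrelevant to the formation variables (the $\hat\alpha_{ij}$ are differences) rather than a genuine failure of stability, and that the coupling from $\xi$ into $\rho$ does not destroy the triangular structure — both are true here but deserve an explicit word. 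A secondary subtlety is justifying that the crude factor-of-$2$ in $\tfrac{1}{2\gamma\lambda R^2 M}$ really absorbs the variation of $f_i$ away from its equilibrium value within a small enough neighborhood; since we only claim \emph{local} exponential stability, this is a routine continuity argument but should be stated.
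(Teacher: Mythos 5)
Your proposal is correct and follows essentially the same route as the paper's proof: decouple the dynamics, linearize the $\rho_i$-iteration at $\rho_i=R_i$ to obtain the bound $h<\tfrac{1}{2\gamma\lambda R^2 M}$, pass to $\xi_i(kh)=\alpha_i(kh)-kh\lambda\mu c-d_i$ and linearize the consensus iteration to obtain $h<\tfrac{1}{\lambda\mu^2 d_{max}}$, and take the minimum. The only cosmetic difference is in the $\xi$-step: the paper writes the linearized update as $\xi(kh+h)=\big(I-h\lambda\mu^2(D-A)\big)\xi(kh)$ and certifies consensus by citing the Ren--Beard result for row-stochastic matrices with positive diagonal (which is where the condition $h\lambda\mu^2 d_{max}<1$ enters), whereas you argue directly on the Laplacian spectrum with a Ger\v{s}gorin estimate --- both give the same sampling bound.
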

%

% \begin{comment}
%
\begin{proof}
	We prove the local stability and   determine $h_{max}$ by considering (\ref{control:rho_sd}) and  (\ref{control:alpha_sd}).
	
	First, consider the dynamics of $\rho_i$ in (\ref{control:rho_sd}).
	Introduce error variables $\Delta_i(kh)=\rho_i(kh)-R_i$.
	Then we get
	\begin{align}\nonumber
		& \Delta_i(kh+h)=\Delta_i(kh)  \\\nonumber
		& -h\gamma \lambda f_i(kh)  \Delta_i(kh)\big(\Delta_i(kh)+R_i\big)\big(\Delta_i(kh)+2R_i\big)
	\end{align}
	which can be linearized around the equilibrium zero as
	\begin{align} \nonumber
		\Delta_i(kh+h)&=\Delta_i(kh)- 2 h\gamma \lambda f_i(kh)  R_i^2 \Delta_i(kh)  \\ \nonumber
		&=\Delta_i(kh)\Big(1-2h\gamma \lambda f_i(kh) R_i^2\Big).
	\end{align}
	It's clear that $\lim_{k\rightarrow\infty}\Delta_i(kh)=0$ if and only if
	\begin{equation}\label{eq:thm2_1}
	    \prod_{k=0}^\infty(1-2h\gamma \lambda f_i(kh) R_i^2)=0.
	\end{equation}
Since  	$f_i(kh)$ is lower bounded by a positive number, one has %  (\li{ʲôÒâ˼£¿})
$$\sum_{k=0}^\infty f_i(kh)=\infty.$$
Therefore, if $1-2h\gamma \lambda f_i(kh) R_i^2\in(0,1), \forall k=0,1,\cdots$, then (\ref{eq:thm2_1}) holds.
	 Note that $f_i(kh)\leq M$. Then if
	 	\begin{eqnarray}
		h < \frac{1}{2R^2\gamma\lambda M},
	\end{eqnarray}
	we get $1-2h\gamma \lambda f_i(kh) R_i^2\in(0,1), \forall k=0,1,\cdots$.
% 	\begin{eqnarray}
% 		h<\frac{1}{2R_i^2\gamma\lambda f_i(kh)},\quad \forall k=0,1,\cdots.
% 	\end{eqnarray}
% 	Since $f_i(t)$ is bounded from the condition and the system should be controlled under the same time delay for convergence, we have
% 	\begin{eqnarray}
% 		h < \frac{1}{2R^2\gamma\lambda M},
% 	\end{eqnarray}
% 	where $R$ and $M$ is defined as in Theorem \ref{th:sd}.
	
	Next, consider the dynamics of $\alpha_i$ in (\ref{control:alpha_sd}).
	Using the similar transformation to that in the proof of Theorem \ref{th:continuous},
	we can also get the system of $\xi_i$ as
	\begin{align}\nonumber
		& \xi_i(kh+h) = \xi_i(kh)   \\\nonumber
		& + h\lambda \mu^2 \sum_{j\in\mathcal{V}}a_{ij}\tanh\big(\xi_j(kh)-\xi_i(kh)\big),
	\end{align}
	where $\xi_i(kh)=\alpha_i(kh)-kh\lambda \mu c-d_i$, $i \in \mathcal{V}$.
	Linearize the above system at the equlibrium $\xi_j- \xi_i = 0, \forall i\neq j$, i.e., $\hat{\alpha}_{ij}=d_{ij}, \forall i\neq j$, one can have
	\begin{align} \label{cal:xi_linear_sd}
		& \xi_i(kh+h) \\ \nonumber
		=& \xi_i(kh)(1-s \sum_{j\in\mathcal{V}}a_{ij}) + s \sum_{j\neq i}a_{ij}\xi_j(kh),
% 		\xi_i(kh)\bigg[1+s \sum_{j\in\mathcal{V}}\frac{-a_{ij}}{\cosh^2(\xi_j(kh)-\xi_i(kh))}\bigg] \nonumber\\
% 		&+s \sum_{j\neq i}\Big(\frac{a_{ij}}{\cosh^2(\xi_j(kh)-\xi_i(kh))}\xi_j(kh)\Big) \nonumber\\
% 		&=\xi_i(kh)(1-s \sum_{j\in\mathcal{V}}a_{ij}) + s \sum_{j\neq i}\big(a_{ij}\xi_j(kh)\big),
	\end{align}
	% (\todo{check the above equation})
	where $s\triangleq h \lambda \mu^2$ is constant.
	Furthermore, using the adjacent matrix $A$ and degree matrix $D$ of the directed graph $\mathbb{G}(A)$,
	one can rewrite (\ref{cal:xi_linear_sd}) in the matrix form
	\begin{eqnarray} \label{cal:xi_m}
		\xi(kh+h)=H\xi(kh),
	\end{eqnarray}
	where $H$ is a  matrix given by
	\begin{eqnarray}
		H=I-s(D-A),
	\end{eqnarray}
	and $\xi(kh)=\big(\xi_1(kh),\cdots,\xi_n(kh)\big)^T$.
	Note that the graph $\mathbb{G}(A)$ contains a directed spanning tree.
	%$\mathcal{G}(H)$ is also strong connected, which is to say $H$ is irreducible. From Lemma \ref{le:diag_positive} we know that the system (\ref{cal:xi_m}) solves the consensus problem
	Then one can check that the system (\ref{cal:xi_m}) reaches a consensus \cite{ReBe05},
	if the diagonal entries of $H$ are all positive.  %\cite{Renwei2004},
	It implies that  $\lim_{t\rightarrow \infty}(\xi_j-\xi_i)=0$, and hence $\lim_{t\rightarrow \infty}\hat{\alpha}_{ij}=d_{ij}$.
	
	%That is if $h_{ii}>0,\forall i=1,\cdots,N$, then $\lim_{t\rightarrow \infty}(\xi_j-\xi_i)=0$, i.e. $\lim_{t\rightarrow \infty}(\hat{\alpha}_{ij})=d_{ij}$.
	To guarantee that the diagonal entries of $H$ are all positive, it is required that   %\vspace{-0.3cm}
	$$1-h\lambda \mu^2 \max_{i\in\mathcal{V}}\bigg(\sum_{j\in\mathcal{V}} a_{ij} \bigg)>0,$$
	which implies %\vspace{-0.3cm}
	$$h < \frac{1}{\lambda \mu^2 \max_{i\in\mathcal{V}}\bigg(\sum_{j\in\mathcal{V}} a_{ij}\bigg)}.$$

	Now one can obtain an upper bound $h_{max}$ on the sampling period as %\vspace{-0.3cm}
	\begin{eqnarray}
		h_{max} = \min\bigg(\frac{1}{2\gamma \lambda R^2 M},\frac{1}{\lambda \mu^2 d_{max}}\bigg)
	\end{eqnarray}
	and if $h<h_{max}$, the desired formation defined in Definition \ref{def:formationFS} is locally exponentially stable.
\end{proof}
%
% \end{comment}

\par~

%
%%%%%%%%%%%%====================================%%%%%%%%%%%%
\section{Simulation results}\label{se:simulations}
%%%%%%%%%%%%====================================%%%%%%%%%%%%
%

% ͬһ¸ötopology£¬Á¬Ðøϵͳ£¬²ÉÑùϵͳ
% ²ÎÊýºÍÆÚÍû±à¶ÓµÄÑ¡Ôñ£¬¾¡Á¿ÈòÉÑùÖÜÆÚ´óһЩ

In the simulations, we consider a system consisting of seven agents.
The target starts from the point $(0,0)$ in the plane without loss of generality.
The initial states of the seven agents are generated randomly.
In Fig. \ref{fig:simulation},
we show the simulation results of the \revision{continuous} controller (\ref{control:local}) and the sampled-data based controller (\ref{control:local_sd}), respectively.

\begin{figure}[thp]
    \centering
    \includegraphics[width=0.45\linewidth]{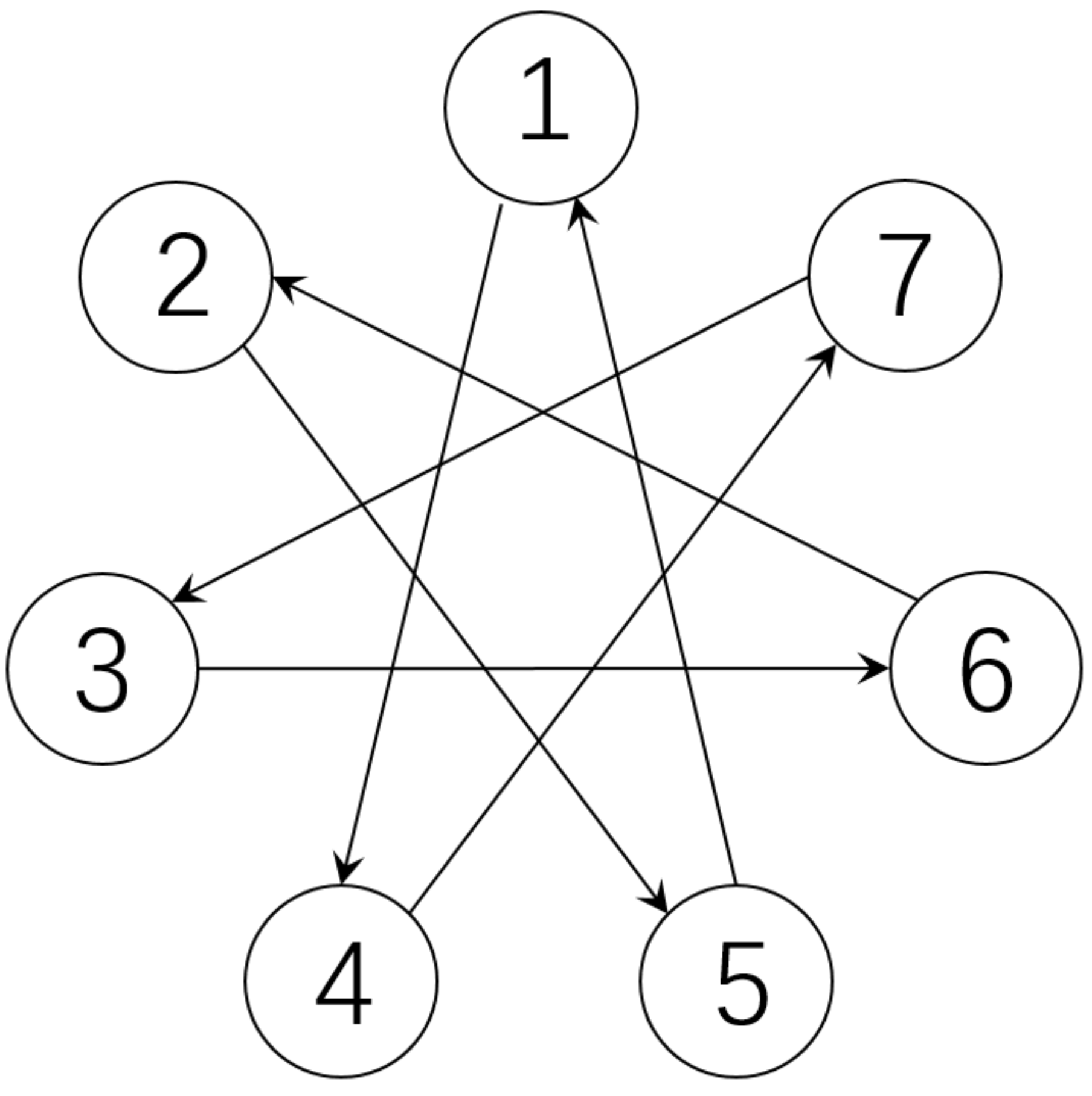}
    \caption{Neighbouring relationship of the agents.}
    \label{fig:topo}% \vspace{-0.8cm}
\end{figure}

\begin{figure*}[thpb]
\begin{center}	
           \subfigure[Continuous-time case]{\label{fig:con_tra}
           \includegraphics[width=0.45\linewidth]{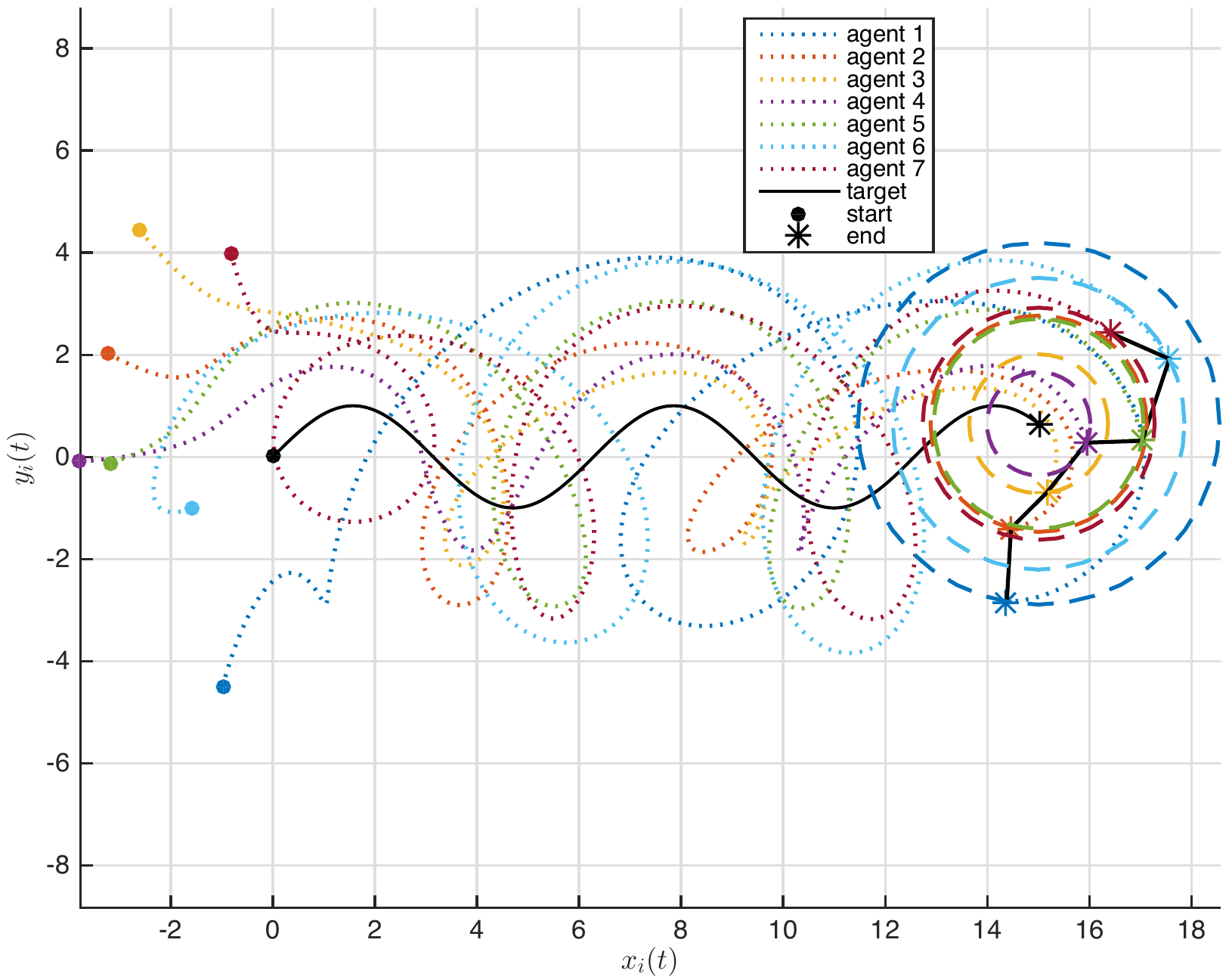}}
           \subfigure[Continuous-time case]{\label{fig:con_dis}
           \includegraphics[width=0.45\linewidth]{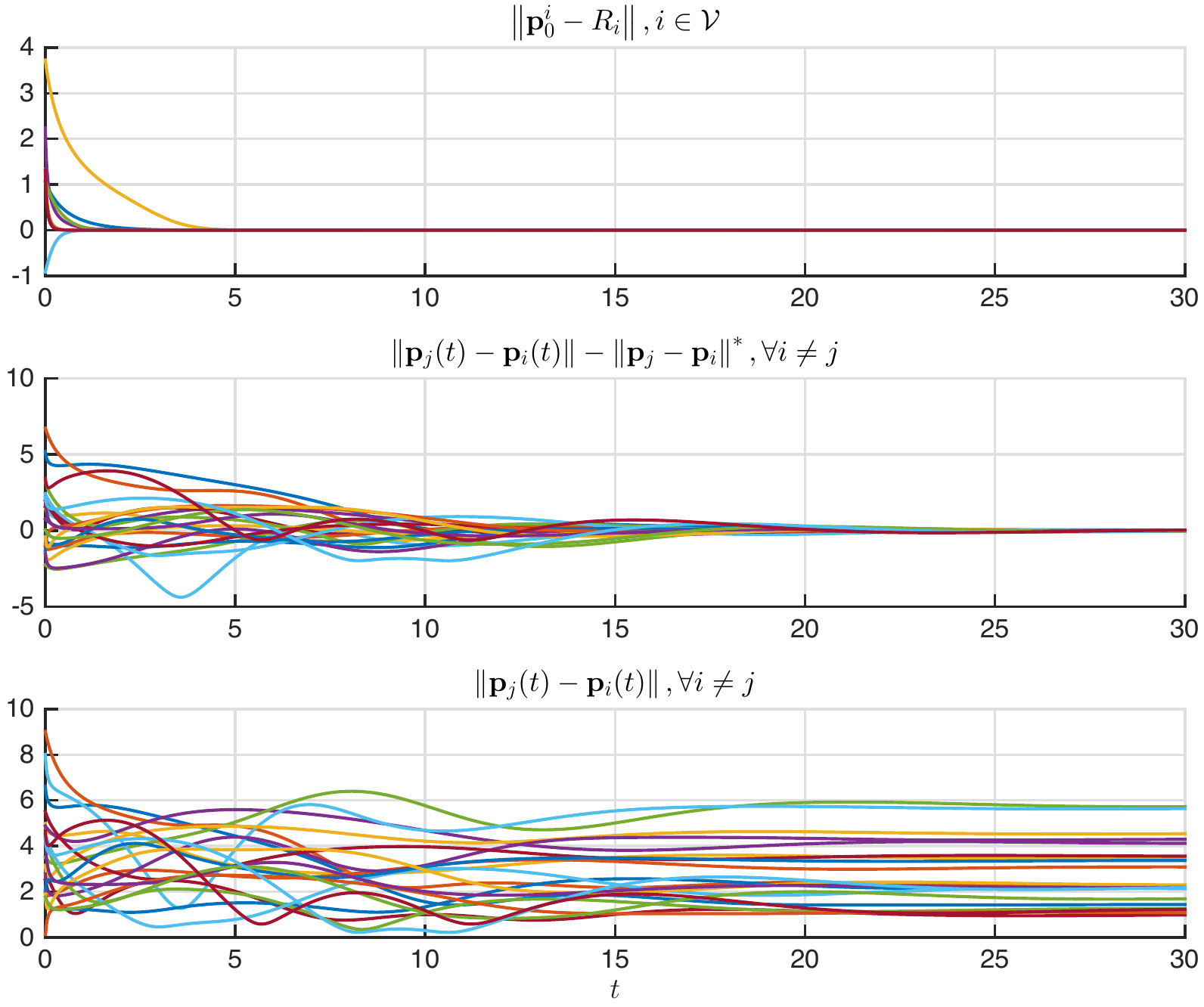}}
           \subfigure[Sampled-data-based case]{\label{fig:sap_tra}
           \includegraphics[width=0.45\linewidth]{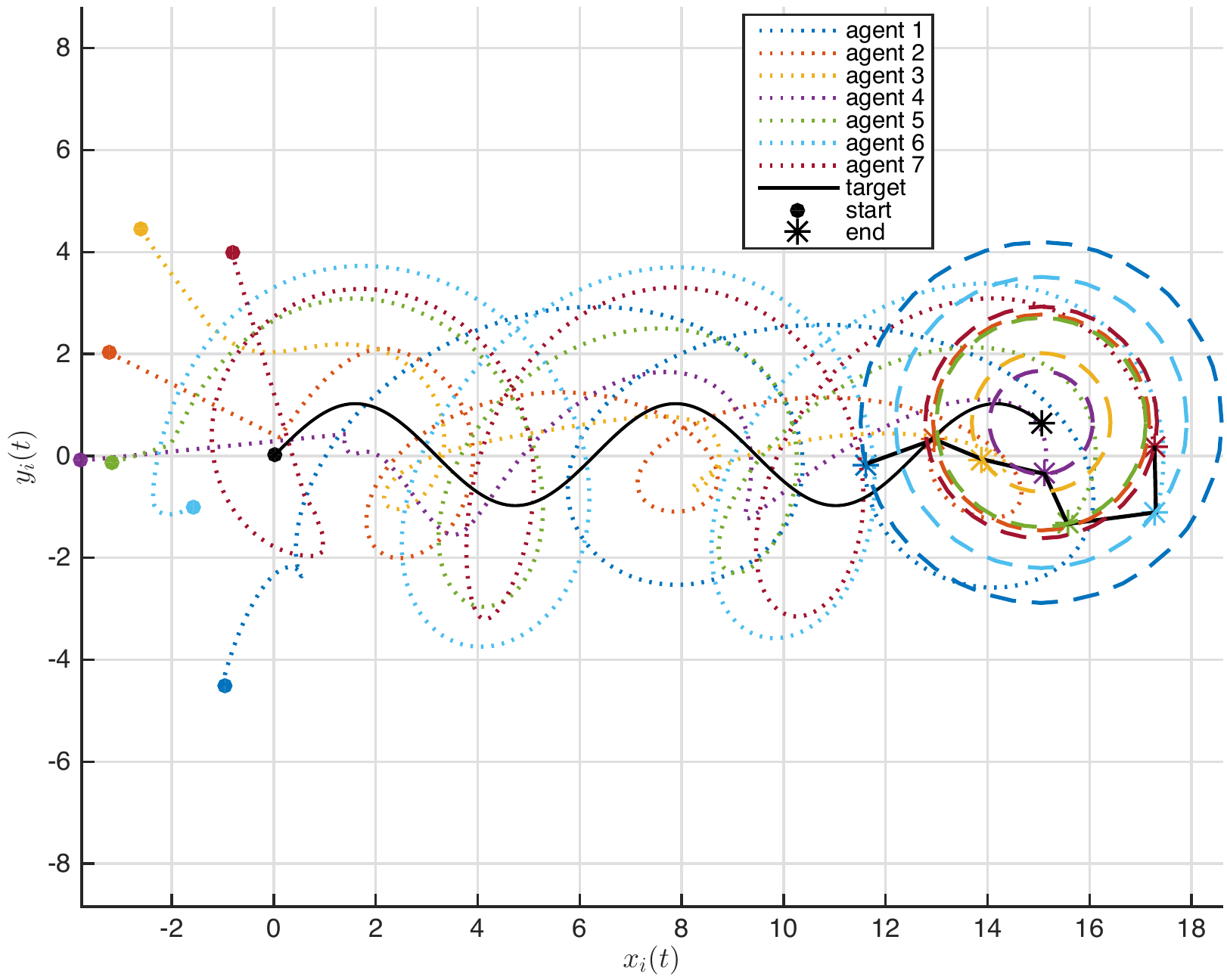}}
           \subfigure[Sampled-data-based case]{\label{fig:sap_dis}
           \includegraphics[width=0.45\linewidth]{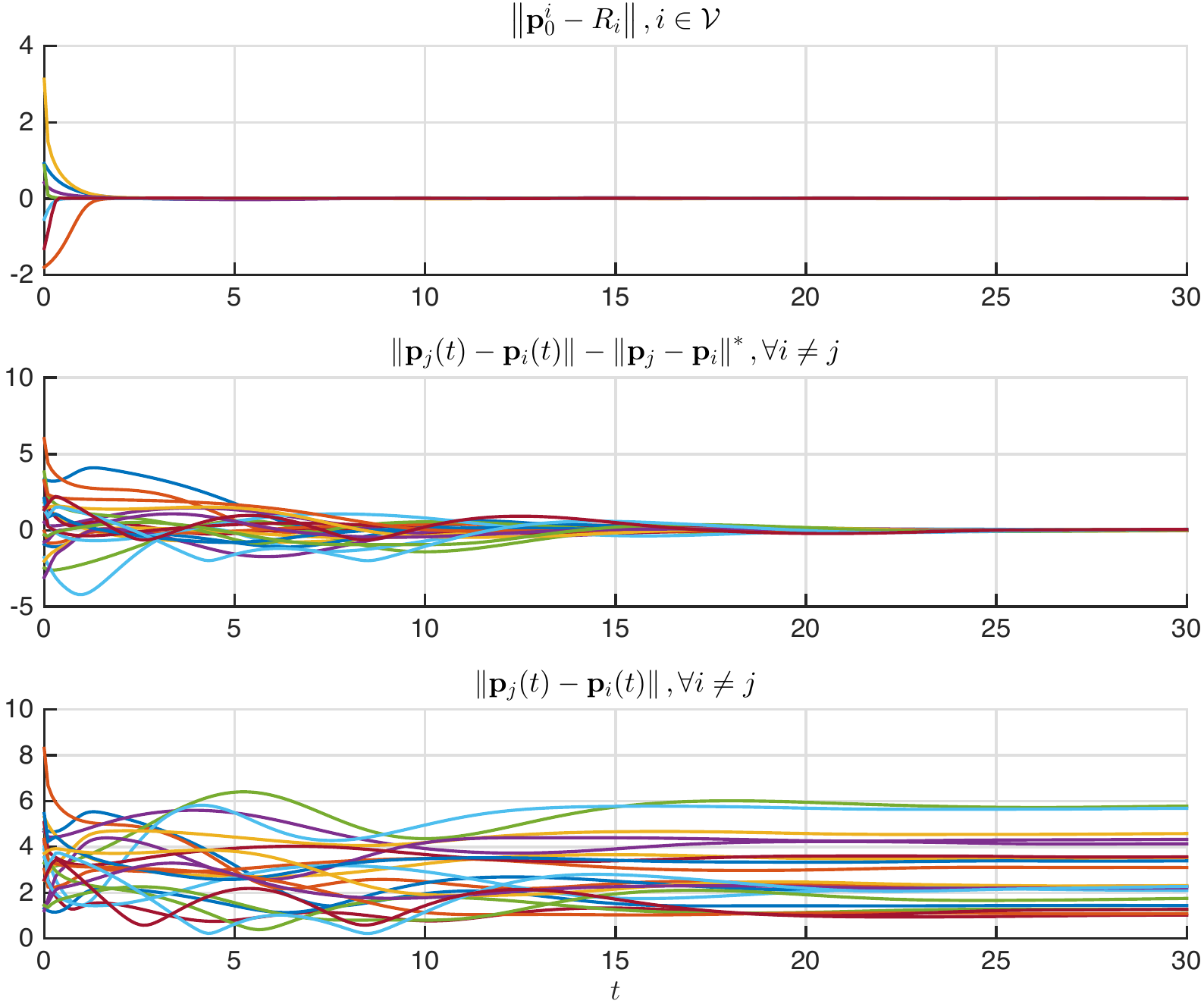}}
	\caption{Simulation results of the proposed controllers solving the formation control problem
			when $N=7$, and $c = 1.1 \mu = -1, \gamma = 1, \lambda = 0.5$.
		(a)(b) Continuous-time case. (c)(d) Sampled-data-based case with $h=0.1$.
		(a)(c) Trajectories of seven agents in the plane;
		(b)(d) Differences between each agent's current distance and the desired one to the target,
			differences between the current distances and the desired ones between all pairs of agents,
			and distances between all pairs of agents.
	}\label{fig:simulation}% \vspace{-0.8cm}
\end{center}
\end{figure*}

For both scenarios, we choose the desired formation as the Big Dipper surrounding the target,
%  $\mathbf{R}=[3.5412,2.1177,1.3604,1.0120,2.0537,2.8568,2.2728]^T$.
while the target is set to move along a sinusoidal curve.
The graph $\mathbb{G}(A)$, which describes the neighbouring relationship of the agents,
is set to be a directed loop shown in Fig \ref{fig:topo}. %, since it's sufficiently harsh.
And $a_{ij}=1$ for  $(j,i)\in \mathcal{E}$ and $a_{ij}=0$ otherwise.
The parameters of the controllers are chosen as
$ c = 1.1, \mu = -1, \gamma = 1, \lambda = 0.5$.
Moreover, for the sampled-data system, from Theorem \ref{th:sd}, we have $h_{max} \approx0.0379.$
%\begin{eqnarray}  \nonumber
%     h_{max} = \min\Big(\frac{1}{2\times1\times0.5\times3.5412^2\times 2.1}, \frac{1}{0.5\times(-1)^2\times1}\Big)\approx0.0379.
%\end{eqnarray}
%
However, the upper bound of the sampling period calculated according to Theorem~\ref{th:sd}  is rather conservative. The controller still works when $h$ takes a larger value.
Thus we choose the sampling period $h=0.1$ to perform the simulation under the sampled-data controller.

%For each case, we show the trajectories of seven agents,
%the errors between each agent's current distance and the desired one to the target,
%the errors between current distances and the desired ones between all pairs of agents,
%and the distances between all pairs of agents.

The simulation results clearly indicate that these agents asymptotically converge to the prescribed formation under the proposed controllers (\ref{control:local}) and (\ref{control:local_sd}) for the \revision{continuous} case and the sampled-data based case, respectively.
Especially, it is shown that the sampled-data based controller still works when the target is moving, although we only give the theoretical analysis for the situation of the static target.
Moreover, since the distances between any two agents are all positive, the collision avoidance is guaranteed, which makes the controller more suitable to apply to real robots.

% The errors between each agent's current distance and the desired one to the target,
% $\| {\mathbf{p}_0^i}(t) \| - R_i), i \in \mathcal{V}$

% the errors between current distances and the desired ones between all pairs of agents
% $\| \mathbf{p}_j(t) - \mathbf{p}_i(t)  \|   - (\| \mathbf{p}_j - \mathbf{p}_i \|)^\ast,  \forall  i\neq j$

% and the distances between all pairs of agents

% $\| \mathbf{p}_j(t) - \mathbf{p}_i(t)  \|, \forall  i\neq j$

\par~

%
%%%%%%%%%%%%====================================%%%%%%%%%%%%
\section{Conclusions}\label{se:conclusions}
%%%%%%%%%%%%====================================%%%%%%%%%%%%
%

In this paper, we have studied the formation control problem for a group of mobile agents which can only measure the local information in their own \revision{local} frame. % without knowing a global coordinate frame of a common reference direction.
The problem includes two sub-objectives of forming a desired geometric pattern and keeping the formed geometric pattern with a desired distance to a static/moving target.
Then using the idea of decoupled design, we have designed a distributed local controller combining two parts to solve the control problem.
Furthermore, the sampled-data based controller has been proposed.
Theoretical analysis has been provided to show the convergence of the system under our proposed controller,
for both the \revision{continuous} case with a static/moving target and the sampled-data one with a static target.
Finally, numerical simulations have been performed to demonstrate the effectiveness and performance of the controllers.
%
%We emphasize that the desired formation can be any geometric pattern, and the neighboring relationship of the $N$-agent system only has the requirement of containing a directed spanning tree.
%%
%So that our designed controllers are universal.

Notice that one of the nice properties of our proposed controllers is that no collision between agents ever takes place, which makes the controller more suitable to apply to real robots. Such a property has been shown clearly via simulations, however, the theoretical analysis on this property is missing and is under investigation.
%we haven¡¯t proven it yet. Currently, we are working on proving the property of collision avoidance.

\par~

%
%%%%%%------------------------------------%%%%%%
%\appendix[\chen{¸ñʽ²Î¿¼}]
%%%%%%------------------------------------%%%%%%
%

%
%%%%%%%%%%%%====================================%%%%%%%%%%%%
%%%%%%%%%%%%====================================%%%%%%%%%%%%
%%%%%%%%%%%%====================================%%%%%%%%%%%%
%
%
%
%\IEEEpeerreviewmaketitle
%
%
%\newpage

% Can use something like this to put references on a page
% by themselves when using endfloat and the captionsoff option.
%\ifCLASSOPTIONcaptionsoff
% \newpage
%\fi
%
%
%%%%%%%%%%%%====================================%%%%%%%%%%%%
%%%%%%%%%%%%====================================%%%%%%%%%%%%
%%%%%%%%%%%%====================================%%%%%%%%%%%%
%
%--------------²Î¿¼ÎÄÏ×-------------
%
\bibliographystyle{IEEEtran}        % Include this if you use bibtex
%\bibliography{/Users/Chen/===Working===/ref_Chen}
%\bibliography{ref_Chen}

% Generated by IEEEtran.bst, version: 1.13 (2008/09/30)

%
%%%%%%%%%%%%====================================%%%%%%%%%%%%
%%%%%%%%%%%%====================================%%%%%%%%%%%%
%%%%%%%%%%%%====================================%%%%%%%%%%%%
%

\end{document}